\newcommand{\abs}[1]{\lvert#1\rvert}
\newcommand{\absinn}[1]{\vert\langle {#1} \rangle\rvert}
\newcommand{\argmin}[1]{\mathop{\rm argmin}\limits_{#1}}
\newcommand{\argmax}[1]{\mathop{\rm argmax}\limits_{#1}}
\newcommand{\R}{{\mathbb R}}
\newcommand{\C}{{\mathbb C}}
\newcommand{\vx}{{\mathbf x}}
\newcommand{\vy}{{\mathbf y}}
\newcommand{\be}{{\mathbf e}}
\newcommand{\bl}{{\mathbf 0}}
\newcommand{\FP}{{\rm FP}}
\newcommand{\Z}{{\mathbb Z}}
\newcommand{\rank}{{\rm rank}}
\renewcommand{\omega}{\eta}
\newcommand{\RNum}[1]{\uppercase\expandafter{\romannumeral #1\relax}}
\newtheorem{definition}{Definition}[section]
\newtheorem{theorem}[definition]{Theorem}
\newtheorem{lemma}[definition]{Lemma}
\newtheorem{conjecture}[definition]{Conjecture}
\newtheorem{remark}[definition]{Remark}
\date{}
\begin{document}
\baselineskip 18pt
\bibliographystyle{plain}
\title{The Minimizers of the $p$-frame potential}

\author{Zhiqiang Xu}
\thanks{The project  was supported  by NSFC grant (91630203, 11688101),
Beijing Natural Science Foundation (Z180002).}

\address{LSEC, Inst.~Comp.~Math., Academy of
Mathematics and System Science,  Chinese Academy of Sciences, Beijing, 100091, China
\newline
School of Mathematical Sciences, University of Chinese Academy of Sciences, Beijing 100049, China}
\email{xuzq@lsec.cc.ac.cn}

\author{Zili Xu}
\address{ LSEC, Inst.~Comp.~Math., Academy of
Mathematics and System Science,  Chinese Academy of Sciences, Beijing, 100091, China}
\email{ xuzili@lsec.cc.ac.cn}
\keywords{Frame potential, Tight frames, Spherical designs }
\begin{abstract}
For any positive real number $p$, the $p$-frame potential of $N$
unit vectors $X:=\{\vx_1,\ldots,\vx_N\}\subset \R^d$ is defined as
$\FP_{p,N,d}(X)=\sum_{i\neq j}\absinn{\vx_i,\vx_j}^p$.  In this paper,
we focus on this quantity for $N=d+1$ points and establish uniqueness of minimizers of $\FP_{p,d+1,d}$ for all   $p\in (0,2)$. Our results completely solve the minimization problem
 of the $p$-frame potential when $N=d+1$, confirming a conjecture posed
 by Chen, Gonzales, Goodman, Kang and Okoudjou \cite{Chen}.

\end{abstract}

\maketitle

\section{Introduction}
\subsection{The $p$-frame potential}

Minimal potential energy problems have been actively discussed over the last decades in connection with  applications in physics, signal analysis and numerical integration. Generally, one aims to find distributions of $N$ points on the unit sphere which minimize the potential energy over all sized $N$ configurations \cite{y, Cohn, Saff}.

One of the most interesting potential energies is the $p$-frame potential. Assume that $X:=\{\vx_i\}_{i=1}^{N}$  where  $\vx_i\in \R^d$ with $\|\vx_i\|_2=1, i=1,\ldots,N$.
For $p>0$, the value
\begin{equation}\label{e2}
\FP_{p,N,d}(X):=\sum\limits_{i=1}^{N}\sum\limits_{j\neq i}\absinn{\vx_i,\vx_j}^p,
\end{equation}
 is called the {\em $p$-frame potential } of $X$ (see  \cite{Ehler, Chen}). The minimization problem  of the $p$-frame potential is to solve
\begin{equation}\label{eq:e2}
\argmin{X\in S(N,d)} \FP_{p,N,d}(X),
\end{equation}
where
$
S(N,d)
$
denotes all sets of $N$ unit-norm vectors in $\R^d$. This problem actually has a long history and has attracted much attention.
For $N\leq d$, the set of $N$ orthogonal vectors in $\R^d$ is always the minimizer of \eqref{eq:e2} for any positive $p$ and hence we only consider the case where  $N\geq d+1$. We also note that the value of $\FP_{p,N,d}(X)$ does  not change if we replace $\vx_i$ by $c_iU\vx_i$ for each $i\in\{1,2,\cdots,N\}$, where $U$ is an orthogonal matrix and $c_i\in\{1,-1\}$. Thus, for convenience  we say the minimizer of (\ref{eq:e2}) is unique if the solution to
 (\ref{eq:e2}) is unique up to a common orthogonal transformation and a real unimodular constant for each vector.

\subsection{Related work}

There are many results which presented a lower bound of $\FP_{p,N,d}(X)$.
  When $p=2k\in {2\mathbb Z}_+$, the following bound
\begin{equation}\label{e4}
\FP_{2k,N,d}(X)\geq N^2\frac{1\cdot 3\cdot 5\dots (2k-1)}{d(d+2)\dots (d+2k-2)}-N
\end{equation}
was  presented by Sidelnikov in \cite{Sidelnikov}. The equality in (\ref{e4}) holds precisely
when $X\cup-X$ is a  spherical $2k$-design, see \cite{Goethals,Sidelnikov}. The set $X=\{\vx_i\}_{i=1}^{N}$ attaining the bound in (\ref{e4}) can also be identified as a projective $k$-design since we consider $\vx_i$ and $-\vx_i$ as the same point (see \cite{Hoggar} for the definition of projective designs).
  For the special case $k=1$, i.e. $p=2$, the minimizers of the $2$-frame
potential are projective $1$-designs \cite{Goethals,Sidelnikov}. Noting the fact that
finite unit-norm tight frames (FUNTFs) are in one-to-one correspondence with
projective 1-designs \cite{Delsarte}, we know that the equality in (\ref{e4}) holds
for $k=1$ and all $N\geq d$ as long as $X$ is a FUNTF (see also \cite{Fickus}).
However, when $k>1$, the bound in (\ref{e4}) is not tight for small $N$ since the
existence of spherical designs requires $N$ to be large enough \cite{Delsarte}.

For any $p>2$,  Ehler and Okoudjou provided another bound in \cite{Ehler}:
\begin{equation}\label{e5}
\FP_{p,N,d}(X) \geq N(N-1)\left(\frac{N-d}{d(N-1)}\right)^{\frac{p}{2}},
\end{equation}
where equality holds if and only if $X$ is an equiangular tight frame (ETF) in $\R^d$
\cite{ETF, Holmes}. We take $N=d+1$ as an example. Since there always exist $d+1$
unit vectors in $\R^d$ forming an ETF \cite{ETF,Seidel}, namely the regular simplex
of size $d+1$, the set of these vectors uniquely minimizes the $p$-frame potential for
$p> 2$.

When $p\in (0,2)$, not much is known about minimizers of this value except in a few special cases. In \cite{Ehler}, Ehler and Okoudjou solved the simplest case where $d=2$ and $N=3$ and also proved  that the minimizer of the $p$-frame potential is exactly $n$ copies of an orthonormal basis if $N=nd$ where $n$ is a positive integer. In \cite{Glazyrin1}, Glazyrin  provided a lower bound for any $1\leq p\leq2$:
\begin{equation}\label{e7}
\FP_{p,N,d}(X)\geq 2(N-d)\frac{1}{p^{\frac{p}{2}}(2-p)^{\frac{2-p}{2}}},
\end{equation}
but the condition under which  equality holds is strict.
 In \cite{Chen}, Chen, Gonzales, Goodman, Kang and Okoudjou considered this special
 case where $N=d+1$.  Particularly, \cite{Chen} showed through numerical experiments
  that the sets $L_k^d$, which they call lifted ETFs, seem to be minimizers of the
   $p$-frame potential for certain $k$ depending on $p$. Here,
   $L_k^d=\{\vx_1,\ldots,\vx_{d+1}\}\subset \R^d$ is defined as a set of $d+1$ unit vectors in $\R^d$ satisfying
\begin{equation}\label{gram}
\absinn{\vx_i,\vx_j}:=\left\{
        \begin{array}{cl}
        \frac{1}{k} & i,j\in \{1,\ldots,k+1\}, i\neq j\\
                  1 & i=j\\
          0 & else
        \end{array}
      \right. .
\end{equation}
Note that $\{\vx_i\}_{i=1}^{k+1}\subset L_k^d$ actually forms an ETF in some subspace $W\subset \R^d$ with dimension $k$ and the rest of $d-k$ vectors form an orthonormal basis in the orthogonal complement of $W$.

More precisely, the following  conjecture is proposed in  \cite{Chen}:

\begin{conjecture}\label{conj}
Suppose $d\geq2$. Set $p_0:=0$, $p_d:=2$ and $p_k:=\frac{\ln(k+2)-\ln(k)}{\ln(k+1)-\ln(k)}$ for each $k\in\{1,2,\ldots,d-1\}$. Then, when $p\in(p_{k-1},p_k]$,  $k=1,2,\ldots,d$, the set $L_k^d$ minimizes the $p$-frame potential when $N=d+1$.
\end{conjecture}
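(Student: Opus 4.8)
The plan is to pass to the Gram matrix $G=(\langle \vx_i,\vx_j\rangle)_{i,j=1}^{d+1}$ and to prove the sharp global bound $\FP_{p,d+1,d}(X)\ge M(p)$, where $M(p):=\min_{1\le k\le d}(k+1)k^{1-p}$, with equality precisely when $X$ is a lifted ETF $L_k^d$ whose simplex has the optimal size. A minimizer exists since each $\vx_i$ ranges over the compact sphere $\Sd$ and $\FP_{p,d+1,d}$ is continuous for $p>0$. A direct computation gives $\FP_{p,d+1,d}(L_k^d)=(k+1)k^{1-p}$, and comparing consecutive values through the ratio $(k+2)(k+1)^{1-p}/\big((k+1)k^{1-p}\big)$ shows that this quantity, as a function of the integer $k$, decreases exactly while $p\ge p_k$; hence its minimizer is the $k$ with $p\in(p_{k-1},p_k]$. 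This last step is elementary and reproduces the thresholds $p_k$, so the theorem reduces to the bound $\FP_{p,d+1,d}(X)\ge M(p)$ together with its equality analysis.

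The engine of the lower bound is the single linear relation forced by having $d+1$ vectors in $\Rd$. Pick $c\neq \bl$ with $\sum_i c_i\vx_i=\bl$; after replacing $\vx_i$ by $\mathrm{sgn}(c_i)\vx_i$ (which changes neither $|\langle\vx_i,\vx_j\rangle|$ nor $\FP_{p,d+1,d}$) we may assume $c_i\ge 0$, and expanding $0=\|\sum_i c_i\vx_i\|_2^2$ yields
\begin{equation*}
\sum_{i\neq j}c_ic_j\langle \vx_i,\vx_j\rangle=-\sum_i c_i^2=-\norm{c}^2 .
\end{equation*}
For $p\in(1,2)$ this single constraint already suffices: since $|\langle\vx_i,\vx_j\rangle|\ge-\langle\vx_i,\vx_j\rangle$ and $c_ic_j\ge0$, Hölder's inequality with exponents $p$ and $p/(p-1)$ gives
\begin{equation*}
\norm{c}^2\le\sum_{i\neq j}c_ic_j|\langle \vx_i,\vx_j\rangle|\le\Big(\sum_{i\neq j}|\langle \vx_i,\vx_j\rangle|^p\Big)^{1/p}\Big(\sum_{i\neq j}(c_ic_j)^{\frac{p}{p-1}}\Big)^{\frac{p-1}{p}},
\end{equation*}
so that $\FP_{p,d+1,d}(X)\ge \norm{c}^{2p}\big/B(c)^{p-1}$, where $B(c):=\sum_{i\neq j}(c_ic_j)^{p/(p-1)}$.

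It then remains to prove the scalar inequality that, for every nonnegative $c$ with exactly $s\ge 2$ nonzero entries, $\norm{c}^{2p}\big/B(c)^{p-1}\ge s(s-1)^{1-p}$, with equality iff the nonzero entries of $c$ are equal. Granting this, and noting $s(s-1)^{1-p}=(k+1)k^{1-p}$ with $k=s-1\in\{1,\dots,d\}$, we obtain $\FP_{p,d+1,d}(X)\ge M(p)$ at once. Tracing the equality cases pins down the minimizer: equality in $\FP_{p,d+1,d}(X)\ge\sum_{i,j\in S}|\langle\vx_i,\vx_j\rangle|^p$ forces $\langle\vx_i,\vx_j\rangle=0$ whenever $i$ or $j$ lies outside the support $S$ of $c$ (those vectors split off as an orthonormal set), the scalar equality forces $c$ constant on $S$, and the sign and Hölder equalities force all $\langle\vx_i,\vx_j\rangle$ with $i,j\in S$ equal and negative; thus the vectors indexed by $S$ form a regular simplex of size $|S|=k+1$, i.e.\ exactly $L_k^d$, and $|S|-1$ must be an optimal $k$. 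The range $p=1$ follows by continuity in $p$, and $p\in(0,1)$ reduces to it: since $|\langle\vx_i,\vx_j\rangle|\le1$ we have $|\langle\vx_i,\vx_j\rangle|^p\ge|\langle\vx_i,\vx_j\rangle|$, whence $\FP_{p,d+1,d}(X)\ge\FP_{1,d+1,d}(X)\ge M(1)=2=\FP_{p,d+1,d}(L_1^d)=M(p)$ (using $M(p)=2$ for all $p\le p_1$), and equality now forces every $|\langle\vx_i,\vx_j\rangle|\in\{0,1\}$, leaving only $L_1^d$.

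The crux, and the step I expect to cost the most work, is the scalar inequality above together with its equality characterization, uniformly over $p\in(1,2)$. It is a symmetric, scale-invariant optimization whose only interior critical point is the all-ones vector, but the second-order behaviour there is nearly flat (numerically the all-ones vector is only barely a minimizer), so a soft symmetrization will not close it; I would fix $\sum_i c_i^2$ and analyze $B(c)=\sum_{i\neq j}(c_ic_j)^{p/(p-1)}$ by Schur/majorization or an explicit one-variable reduction, treating drops in the support by induction on $s$. Finally, I would record the precise form of the uniqueness statement: the minimizer is unique (up to the stated symmetries) for every $p\neq p_k$, while at each threshold $p=p_k$ exactly the two lifted ETFs $L_k^d$ and $L_{k+1}^d$ are minimizers, since there $(k+1)k^{1-p}=(k+2)(k+1)^{1-p}$.
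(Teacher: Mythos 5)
Your overall route coincides with the paper's: the linear relation $\sum_i c_i\vx_i=\bl$ is exactly the paper's kernel vector of the Gram matrix ($G\vy=0$ iff $\sum_i y_i\vx_i=\bl$ here), the H\"older step with exponents $p$ and $q=\frac{p}{p-1}$ is identical to \eqref{Holder}, and the reduction of $p\in(0,1)$ to $p=1$ via $|t|^p\ge|t|$ is also the paper's Case 1. The genuine gap is in the scalar inequality you isolate as the crux: as stated it is \emph{false}. You claim that for every nonnegative $c$ with exactly $s$ nonzero entries, $\norm{c}^{2p}/B(c)^{p-1}\ge s(s-1)^{1-p}$. Take $p=1.2$ (so $\frac{p}{p-1}=6$) and $c=(1,1,\epsilon,0,\ldots,0)$, which has exactly $s=3$ nonzero entries: then $\norm{c}^{2p}/B(c)^{p-1}=(2+\epsilon^2)^{1.2}/(2+4\epsilon^{6})^{0.2}\to 2$ as $\epsilon\to 0$, strictly below your claimed bound $3\cdot 2^{1-p}\approx 2.61$. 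The mechanism is structural, not a numerical accident: writing $z_i=c_i^2/\norm{c}^2$ and $\alpha=\frac{p}{2(p-1)}$, your claim says the uniform vector maximizes $M_{\alpha,d+1}(z)=\sum_{i\neq j}z_i^\alpha z_j^\alpha$ among all $z$ with a \emph{fixed} support size $s$; but whenever the globally optimal support size $k+1$ is smaller than $s$, vectors with $s$ nonzero entries that nearly collapse onto $k+1$ coordinates beat the uniform-on-$s$ value, so no support-size-indexed bound of this form can hold, and the equality characterization ("equal nonzero entries") fails with it. Only the global statement $\norm{c}^{2p}/B(c)^{p-1}\ge\min_{1\le k\le d}(k+1)k^{1-p}$ is true, and that is precisely the content of the paper's Lemma \ref{M}.

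That corrected global lemma is not a routine symmetrization, and your proposal defers it with a sketch ("Schur/majorization or a one-variable reduction, induction on $s$") exactly where the paper spends all of Section 3: a KKT argument showing any interior local maximum takes at most two distinct values (Lemma \ref{L1}), a one-variable analysis of $f_{m_1,\alpha,d+1}$ through the sign pattern of $h(v)=(m_2-1)v^{4\alpha-2}-m_2v^{2\alpha}+m_1v^{2\alpha-2}-(m_1-1)$ (Lemmas \ref{L4} and \ref{L2}), a second-derivative perturbation showing that for $\alpha>1+\frac{1}{d-1}$ every local maximum has a zero coordinate (Lemma \ref{L3}), and then induction on $d$ combined with unimodality of $H(x)=x^{1-2\alpha}(x-1)$ to compare the candidate values $H(k+1)$ — the last step being the only part your proposal actually carries out (your threshold computation reproducing $p_k$ is correct). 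You yourself observe that the all-ones point is "barely" a maximizer; that near-degeneracy is exactly why soft arguments fail here and why the support-collapse phenomenon above must be confronted head-on. A minor further point: at $p=1$ continuity in $p$ yields the inequality $\FP_{1,d+1,d}(X)\ge 2$ but not the equality characterization you assert; the paper instead argues directly from $|y_iy_j|\le\frac{1}{2}$.
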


The cases $d=2$ and $p=2$ for Conjecture \ref{conj} are already solved in \cite{Ehler} and \cite{Fickus}, respectively.
The first new result for Conjecture \ref{conj} was obtained by
 Glazyrin in \cite{Glazyrin2} who showed  that an orthonormal basis in $\R^d$ plus a repeated vector minimizes
$\FP_{p,d+1,d}(X)$
for any $p\in(0,2(\frac{\ln{3}}{\ln{2}}-1)]$. Combining Glazyrin's result with the previous ones,  the minimizer of $\FP_{p,d+1,d}(X)$ is only known for $p\in(0,2(\frac{\ln{3}}{\ln{2}}-1)]\cup[2,\infty)$.
  Recently, Park extented Glazyrin's result to the case $N=d+m$ where $1\leq m<d$, and showed that an orthonormal basis plus $m$ repeated vectors is the minimizer for any $p\in[1,2\frac{\ln{(2m+1)}-\ln{(2m)}}{\ln{(m+1)}-\ln{(m)}}]$ (see \cite{Park}). But Conjecture \ref{conj} remains open when $d>2$.

\subsection{Our contributions}
The aim of this paper is to confirm Conjecture \ref{conj} and to show that  the minimizers are unique provided $p\neq p_k$.
 Our main result is the following theorem which  completely solves  the minimal $p$-frame potential problem for the case where $N=d+1$.

\begin{theorem} \label{T1}
Let $d\geq 2$ be an integer. Set $p_0:=0$, $p_d:=2$ and $p_k:=\frac{\ln(k+2)-\ln(k)}{\ln(k+1)-\ln(k)}$ for each $k\in\{1,2,\ldots,d-1\}$. Assume that $p\in(0,2)$ is a real number. Let $X=\{\vx_1,\ldots,\vx_N\}$ be a set of $N$ unit vectors in $\R^d$, where $N=d+1$.
\begin{enumerate}[{\rm (i)}]
\item
For  $p\in (p_{k-1},p_k), k=1,2,\ldots,d$, and for any $X\in  S(d+1,d) $ we have
$
\FP_{p,d+1,d}(X)\geq (k+1)k^{1-p}
$
and equality holds if and only if   $X=L_k^d$.
 \item For $p=p_k, \, k=1,\ldots, d-1$,
 and for any $X\in  S(d+1,d) $ we have
$
\FP_{p,d+1,d}(X)\geq (k+1)k^{1-p_k}
$
and equality holds if and only if
 $X=L_k^d$ or $X=L_{k+1}^d$.
\end{enumerate}

\end{theorem}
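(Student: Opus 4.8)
The plan is to pass to the Gram matrix $G=\big(\langle \vx_i,\vx_j\rangle\big)_{i,j=1}^{d+1}$, which is positive semidefinite with unit diagonal and, being the Gram matrix of $d+1$ vectors in $\R^d$, has rank at most $d$. Hence $G$ is singular and carries a unit null vector $\vv=(\vv_1,\dots,\vv_{d+1})$, i.e.\ $\sum_i\vv_i\vx_i=\bl$. Expanding $\big\|\sum_i\vv_i\vx_i\big\|_2^2=0$ and separating diagonal from off-diagonal terms yields the identity $\sum_{i\neq j}\vv_i\vv_j\langle \vx_i,\vx_j\rangle=-1$. This single scalar relation is the engine of the argument: it converts the geometric constraint ``$d+1$ vectors in $\R^d$'' into one equation linking the off-diagonal inner products.

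For $p\in(1,2)$ put $q=\frac{p}{p-1}>2$. Taking absolute values in the identity and applying H\"older's inequality gives $1\le\sum_{i\neq j}|\vv_i\vv_j|\,\absinn{\vx_i,\vx_j}\le\big(\sum_{i\neq j}|\vv_i\vv_j|^q\big)^{1/q}\,\FP_{p,d+1,d}(X)^{1/p}$, so $\FP_{p,d+1,d}(X)\ge\big(\sum_{i\neq j}|\vv_i\vv_j|^q\big)^{-p/q}$. Since this holds for the configuration's \emph{own} null vector, we obtain the clean bound $\FP_{p,d+1,d}(X)\ge\min_{\norm{\vu}=1}\big(\sum_{i\neq j}|\vu_i\vu_j|^q\big)^{-p/q}$, the minimum being over all unit $\vu\in\R^{d+1}$. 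The problem thus reduces to the purely analytic task of maximizing $F(\vu):=\big(\sum_i|\vu_i|^q\big)^2-\sum_i|\vu_i|^{2q}=\sum_{i\neq j}|\vu_i\vu_j|^q$ on the unit sphere. A direct computation shows the uniform vector supported on $k+1$ coordinates gives $F=(k+1)^{1-q}k$, i.e.\ the bound value $(k+1)k^{1-p}=\FP_{p,d+1,d}(L_k^d)$; so it remains to prove these uniform vectors are the global maximizers of $F$.

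This maximization is where I expect the main difficulty to lie. I would use Lagrange multipliers: at a critical point every support coordinate satisfies $g(|\vu_i|)=\mathrm{const}$ for the one-variable function $g(t)=S\,t^{q-2}-t^{2q-2}$, where $S=\sum_j|\vu_j|^q$. Because $q>2$, $g$ vanishes at $0$, tends to $-\infty$, and has exactly one interior critical point, so $g$ is unimodal and $g(t)=\mathrm{const}$ has at most two positive roots; hence the nonzero magnitudes take at most two distinct values. One then reduces the optimization to an explicit comparison among these ``two-value'' configurations and shows the maximum occurs, uniquely up to permutation and choice of support, at a uniform vector. Granting this, $\max_{\norm{\vu}=1}F(\vu)=\max_{2\le s\le d+1}s^{1-q}(s-1)$, and converting back gives $\FP_{p,d+1,d}(X)\ge\min_{1\le k\le d}(k+1)k^{1-p}$.

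The remaining pieces are more routine. The minimizer of $k\mapsto(k+1)k^{1-p}$ is located by comparing consecutive terms: $\frac{(k+2)(k+1)^{1-p}}{(k+1)k^{1-p}}=\frac{k+2}{k+1}\big(\frac{k+1}{k}\big)^{1-p}>1$ exactly when $p<p_k$, so the sequence is unimodal with minimizer $k$ precisely on $(p_{k-1},p_k]$, and with the tie between $L_k^d$ and $L_{k+1}^d$ at $p=p_k$; this gives items (i) and (ii). Uniqueness follows by chasing the equality cases: equality forces (a) $\vv_i\vv_j\langle\vx_i,\vx_j\rangle\le0$ throughout, (b) H\"older equality $\absinn{\vx_i,\vx_j}^p\propto|\vv_i\vv_j|^q$, and (c) $\vv$ a maximizer of $F$, hence uniform on some $k+1$ coordinates. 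Then (b) forces $\langle\vx_i,\vx_j\rangle=0$ whenever $i$ or $j$ lies outside the support and makes $\absinn{\vx_i,\vx_j}$ constant on the support, while (a) fixes the signs; together with positive semidefiniteness and the uniqueness of the regular simplex among $k+1$ equiangular unit vectors in $\R^k$, this pins down $X=L_k^d$. Finally I would treat $p\in(0,1]$ separately: for $p=1$ use $|\vv_i\vv_j|\le\tfrac12$ in place of H\"older to get $\FP_{1,d+1,d}(X)\ge2$, and for $p\in(0,1)$ the elementary monotonicity $\absinn{\vx_i,\vx_j}^p\ge\absinn{\vx_i,\vx_j}$ gives $\FP_{p,d+1,d}(X)\ge\FP_{1,d+1,d}(X)\ge2=(1+1)\,1^{1-p}$, matching the case $k=1$.
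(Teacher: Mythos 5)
Your skeleton coincides with the paper's proof almost step for step: the unit null vector of the singular Gram matrix (a device the paper credits to Bukh and Cox), the row identity and triangle inequality giving $1\le\sum_{i\neq j}\absinn{\vx_i,\vx_j}\,|y_i||y_j|$, H\"older with conjugate exponent $q=\frac{p}{p-1}$ for $p\in(1,2)$, the reduction to maximizing $\sum_{i\neq j}|u_iu_j|^q$ over the unit sphere (this is the paper's $M_{\alpha,d+1}$ with $z_i=u_i^2$ and $\alpha=q/2$, i.e.\ Lemma \ref{M}), the treatment of $p=1$ via $|y_iy_j|\le\frac12$ and of $p\in(0,1)$ via $\absinn{\vx_i,\vx_j}^p\ge\absinn{\vx_i,\vx_j}$, the equality-case chase, and the closing unimodality comparison of $k\mapsto(k+1)k^{1-p}$, which is exactly the paper's analysis of $H(x)=x^{1-2\alpha}(x-1)$. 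Your Lagrange computation showing that the nonzero magnitudes of a critical point take at most two distinct values is the paper's Lemma \ref{L1}, proved there by the same one-variable unimodality argument.

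The genuine gap is the sentence ``one then reduces the optimization to an explicit comparison among these two-value configurations and shows the maximum occurs \ldots\ at a uniform vector.'' This is the entire technical content of the paper (Lemmas \ref{L4}, \ref{L2}, \ref{L3} and the induction on $d$ in the proof of Lemma \ref{M}), and it cannot be done by value comparison: at a non-uniform two-value critical point with block sizes $m_1+m_2=d+1$ and values $t_0,s_0$, the ratio $v=\sqrt{s_0/t_0}$ solves the transcendental equation $h(v)=(m_2-1)v^{4\alpha-2}-m_2v^{2\alpha}+m_1v^{2\alpha-2}-(m_1-1)=0$, so neither the point nor its critical value has a closed form and an ``explicit comparison'' is not available. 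The paper instead rules such points out as \emph{local maxima}: for $\alpha\le 1+\frac{1}{d-1}$ a substitution $t=\cos^2\theta/m_1$ and a sign analysis of $h$ show that along the two-value family the only interior local maximum is the fully uniform point (Lemma \ref{L2}); for $\alpha>1+\frac{1}{d-1}$ one must leave the two-value family altogether --- the paper perturbs one coordinate of the equal block (to $s_0+l\varepsilon$, the others to $s_0-\varepsilon$) and shows $F''(0)>0$, so a point that is maximal \emph{within} the two-value family is nonetheless a saddle of the constrained problem (Lemma \ref{L3}). Your plan has no mechanism for this symmetry-breaking step, nor for the boundary strata: when the maximizer has vanishing coordinates the problem must be recursed to lower dimension, which the paper handles by induction on $d$. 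Until the maximization lemma is proved along some such lines, your argument establishes the theorem only modulo its hardest ingredient; everything else in the proposal is correct and matches the paper.
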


Based on the previous results and Theorem \ref{T1} in this paper,  in Table \ref{tab1}, we list optima for the minimal $p$-frame potential problem when $N=d+1$. Note that $2(\frac{\ln{3}}{\ln{2}}-1)\approx 1.16993$ and $\frac{\ln{3}}{\ln{2}}\approx 1.58496$.
Hence, $(0,2(\frac{\ln{3}}{\ln{2}}-1)]$ is a subinterval in $(0,\frac{\ln{3}}{\ln{2}})$.
In Table \ref{tab1}, we also use the fact that $L_1^d$ is essentially an orthonormal basis plus a repeated vector and $L_d^d$ forms an ETF in $\R^d$.

{\small
\makeatletter\def\@captype{table}\makeatother
\begin{center}
\caption {Minimizer of the $p$-frame potential when $N=d+1$ } \label{tab1}
\begin{tabular}{|l|l|l}
\cline{1-2}
 $p$ &  Minimizers &    \\ \cline{1-2}
$p\in(0,2(\frac{\ln{3}}{\ln{2}}-1)]$  & $L_1^d$ \cite{Glazyrin2} &  \\ \cline{1-2}
$p=2$ & $L_d^d$ \cite{Sidelnikov,Fickus}  & \\ \cline{1-2}
$p\in(2,\infty)$ & $L_d^d$ \cite{Ehler} &   \\ \cline{1-2}
$p\in(0,\frac{\ln{3}}{\ln{2}})$ & $L_1^d$ (Theorem \ref{T1}) &   \\ \cline{1-2}
$p\in\left(\frac{\ln({(k+1)}/{(k-1)})}{\ln({k}/{(k-1)})},\frac{\ln({(k+2)}/{k})}{\ln({(k+1)}/{k})}\right),k=2,3,\ldots,d-1$ & $L_k^d$ (Theorem \ref{T1}) &   \\ \cline{1-2}
$p\in(\frac{\ln({(d+1)}/{(d-1)})}{\ln({d}/{(d-1)})},2)$ & $L_d^d$ (Theorem \ref{T1}) &   \\ \cline{1-2}
$p=\frac{\ln((k+2)/k)}{\ln((k+1)/k)},\ k=1,2,\ldots,d-1$ & $L_k^d$ and $L_{k+1}^d$(Theorem \ref{T1}) &   \\ \cline{1-2}
\end{tabular}
\end{center}
}

\subsection{Organization}

The paper is organized as follows. In Section 2, we prove Theorem \ref{T1} based on Lemma \ref{M}. The proof of Lemma \ref{M} is presented in Section 3.

\section{Proof of Theorem $\ref{T1}$}

In this section we present a proof of Theorem \ref{T1}. The following lemma plays a key role in our proof of Theorem \ref{T1}. We postpone  its proof to Section 3.
To this end, we set
\[
M_{\alpha, d+1}(z_1,\ldots,z_{d+1})\,\,:=\,\,\sum\limits_{i=1}^{d+1}\sum\limits_{j\neq i}z_i^\alpha z_j^\alpha
\]
where $\alpha>1$.
We consider
\begin{equation}\label{con0}
\argmax{(z_1,\ldots,z_{d+1})} M_{\alpha,d+1}(z_1,\ldots,z_{d+1}),\quad {\rm s.t.}\quad z_1+\cdots+z_{d+1}=1,
z_1\geq0,\ldots,z_{d+1}\geq 0,
\end{equation}
where $\alpha> 1$.
 Noting that $M_{\alpha,d+1}(z_1,\ldots,z_{d+1})$ is a symmetric function on $d+1$ variables $z_1,\ldots,z_{d+1}$, we identify solutions to (\ref{con0}) only up to permutations of  $z_i$.
\begin{lemma}\label{M}
Suppose that $d\geq 1$ is an integer.
 Set
  \begin{equation}\label{eq:ak}
 a_k:=\left\{
        \begin{array}{cl}
        \infty & k=0\\
          \frac{1}{2}\cdot\frac{\ln(k+2)-\ln(k)}{\ln(k+2)-\ln(k+1)} & k\in\{1,2,\ldots,d-1\}\\
          1 & k=d
        \end{array}
      \right. .
\end{equation}
\begin{enumerate}[{\rm (i)}]
\item If   $ \alpha\in (a_{k},a_{k-1})$ then the unique solution to (\ref{con0})
is $\left(\underbrace{\frac{1}{k+1},\ldots,\frac{1}{k+1}}_{k+1},\underbrace{0,\ldots,0}_{d-k}\right)$  where $k=1,2,\ldots,d$.
\item  Assume that $\alpha=a_k$ where $k=1,\ldots,d-1$. Then (\ref{con0}) has exactly two solutions:
$\left(\underbrace{\frac{1}{k+1},\ldots,\frac{1}{k+1}}_{k+1},\underbrace{0,\ldots,0}_{d-k}\right)$ and
$\left(\underbrace{\frac{1}{k+2},\ldots,\frac{1}{k+2}}_{k+2},\underbrace{0,\ldots,0}_{d-k-1}\right)$.
\end{enumerate}

\end{lemma}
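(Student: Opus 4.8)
The plan is to solve the maximization \eqref{con0} directly through its Lagrange/KKT conditions, reducing it to a one-dimensional problem. Write $S_\beta:=\sum_{i=1}^{d+1}z_i^\beta$, so that $M_{\alpha,d+1}=S_\alpha^2-S_{2\alpha}$. The feasible region is the compact simplex, so a maximizer exists, and since the equal-weight point already yields a positive value the maximum is positive. First I would record the first-order conditions: at a maximizer every coordinate with $z_i>0$ satisfies $\partial_{z_i}M_{\alpha,d+1}=2\alpha\,z_i^{\alpha-1}(S_\alpha-z_i^\alpha)=\lambda$ for a common multiplier $\lambda>0$, while the zero coordinates are automatically inactive because $\alpha>1$ forces $\partial_{z_i}M=0$ there. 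Hence every positive coordinate is a root of $h(z):=S_\alpha z^{\alpha-1}-z^{2\alpha-1}=c$. Since $h'(z)=z^{\alpha-2}\big[(\alpha-1)S_\alpha-(2\alpha-1)z^\alpha\big]$ and $\alpha>1$, the function $h$ is strictly increasing then strictly decreasing on $(0,\infty)$, with a single peak $z^\ast$. Therefore $h(z)=c$ has at most two positive roots, so the positive coordinates of any maximizer take at most two distinct values $a<b$, necessarily with $a<z^\ast<b$.

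Next I would rule out, and then exactly locate, the genuinely two-valued configurations. The key computation is the second derivative of $M_{\alpha,d+1}$ along the move that splits two coordinates sharing a common value $v$ while fixing their sum; a direct calculation shows this second derivative equals $4\alpha\,h'(v)$. Applied to two coordinates equal to the smaller value $a$, this is $4\alpha\,h'(a)>0$ because $a<z^\ast$, so splitting them strictly increases $M_{\alpha,d+1}$ and contradicts maximality. Thus the smaller value occurs with multiplicity one, and every maximizer is either equal-weight on its support or of the form $(a,b,\dots,b)$ with one coordinate $a$ and $t$ coordinates $b$, where $a+tb=1$ and $0\le a\le b$. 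On this one-parameter family $M_{\alpha,d+1}$ equals $\phi(a):=2t\,a^\alpha b^\alpha+t(t-1)b^{2\alpha}$ with $b=(1-a)/t$, and a short computation reduces $\phi'(a)=0$ to $G(u):=t\,u^{\alpha-1}-u^\alpha-(t-1)=0$ with $u:=a/b\in[0,1]$. Since $G(1)=0$ and the single sign change of the factor $t(\alpha-1)-\alpha u$ in $G'$ forces any interior root of $G$ to be a local minimum of $\phi$, the maximum of $\phi$ on $[0,1/(t+1)]$ is attained at an endpoint. But the endpoints $a=0$ and $a=b$ are precisely the equal-weight configurations on $t$ and on $t+1$ coordinates, so the global maximum of \eqref{con0} is attained at an equal-weight configuration.

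It then remains to optimize over the support size. For $m$ equal coordinates $1/m$ the objective equals $g(m):=(m-1)m^{1-2\alpha}$, and $g(m+1)/g(m)=\tfrac{m}{m-1}\big(\tfrac{m+1}{m}\big)^{1-2\alpha}$ exceeds $1$ exactly when $\alpha<a_{m-1}$, with equality when $\alpha=a_{m-1}$; here $a_k$ as in \eqref{eq:ak} is exactly the solution of $g(k+1)=g(k+2)$. After checking that $k\mapsto a_k$ is strictly decreasing (a routine logarithmic estimate) with $a_0=\infty$ and $a_d=1$, the finite sequence $g(1),\dots,g(d+1)$ is unimodal in $m$ for each fixed $\alpha$, peaking at $m=k+1$ when $\alpha\in(a_k,a_{k-1})$ and tying at $m=k+1,k+2$ when $\alpha=a_k$. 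Combined with the strict local-minimum property above (which prevents any interior two-valued point from tying the maximum), this gives the unique solution of part (i) and the exactly two solutions of part (ii).

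I expect the main obstacle to be the two-valued case. The first-order analysis cheaply caps the number of distinct values at two, but showing that no honestly two-valued point beats every equal-weight configuration requires both the second-order split computation (to force multiplicity one of the smaller value) and the sign analysis of $\phi$ and $G$ along the connecting one-parameter family. The subtlety is that, contrary to a naive guess, $M_{\alpha,d+1}$ is \emph{not} maximized by the equal-weight configuration for every fixed support size—the optimal support size genuinely depends on $\alpha$—so the argument cannot decouple ``optimize the weights'' from ``optimize the support,'' and must instead be organized entirely around the function $h$ and its single peak $z^\ast$.
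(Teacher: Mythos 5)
Your proposal is correct, and through the hardest part of the argument it takes a genuinely different route from the paper. The opening step coincides: your KKT reduction, making each positive coordinate a root of $h(z)=S_\alpha z^{\alpha-1}-z^{2\alpha-1}=c$ with $h$ unimodal, is exactly Lemma \ref{L1} in different notation (the paper uses $f(x)=x^\alpha+\frac{\lambda}{2\alpha}x^{1-\alpha}$, decreasing-then-increasing, same conclusion: at most two distinct positive values). After that you diverge. The paper classifies all interior local maxima of the two-valued family $f_{m_1,\alpha,d+1}$ via a trigonometric substitution and a sign analysis of an auxiliary function (Lemmas \ref{L4} and \ref{L2}), but only for $\alpha\le 1+\frac{1}{d-1}$; for $\alpha>1+\frac{1}{d-1}$ it instead runs a second-derivative perturbation $F(\varepsilon)$ (Lemma \ref{L3}, which needs $s_0\le t_0$, itself extracted from the $h$-analysis), and then it inducts on $d$. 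You replace all of this with two observations that work uniformly for all $\alpha>1$: (a) splitting two coordinates sharing the smaller value $a$ along the sum-preserving line has vanishing first derivative by symmetry and second derivative $4\alpha h'(a)$ --- I verified the identity $\frac{d^2}{d\varepsilon^2}M\big|_{\varepsilon=0}=4\alpha v^{\alpha-2}\left[(\alpha-1)S_\alpha-(2\alpha-1)v^\alpha\right]=4\alpha h'(v)$ --- and positivity is automatic since two distinct roots of $h=c$ must straddle the peak, so $a<z^\ast$; hence the smaller value has multiplicity one at any global maximizer, with no case split on $\alpha$; and (b) on the residual family $(a,b,\ldots,b)$ your reduction $\phi'(a)=2\alpha b^{2\alpha-1}G(a/b)$ with $G(u)=tu^{\alpha-1}-u^\alpha-(t-1)$ is correct, and since $G'(u)=u^{\alpha-2}\left(t(\alpha-1)-\alpha u\right)$ changes sign at most once, any interior root of $G$ on $(0,1)$ (which exists only when $t(\alpha-1)<\alpha$ and $t\ge 2$; for $t=1$ one has $G(0)=0$ and $G>0$ on $(0,1)$) is a strict local minimum of $\phi$, ruling out genuinely two-valued global maximizers. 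The endgame is shared content in two guises: your ratio criterion $g(m+1)>g(m)\iff\alpha<a_{m-1}$ is exactly right and reproduces \eqref{eq:ak}, and the strict decrease of $a_k=\frac{1}{2}\left(1+c_k/c_{k+1}\right)$ with $c_k=\ln(1+1/k)$ does follow routinely from $(2x+1)\ln(1+1/x)\ge(2x+1)/(x+1)>1$ (log-convexity of $c$), whereas the paper gets unimodality directly from the continuous function $H(x)=x^{1-2\alpha}(x-1)$ and so never needs that estimate. Net assessment: your argument buys a proof with no induction on $d$, no trigonometric substitution, and no dichotomy at $\alpha=1+\frac{1}{d-1}$, at the modest cost of the monotonicity check for $a_k$; when writing it up, make explicit the $t=1$ versus $t\ge2$ distinction in the $G$-analysis and the feasibility of the split perturbation for small $\varepsilon$ --- both are one-line remarks, and nothing in your plan fails.
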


We next state a proof of Theorem \ref{T1}.  Our method of estimating the $p$-frame
potential in this proof is motivated by the work of  Bukh and Cox  \cite{Bukh}. For a
finite set $X=\{\vx_i\}_{i=1}^{N}\in S(N,d)$, Bukh and Cox derived a new lower bound
on $ \mu(X):=\max\limits_{j\neq l}\abs{\langle \vx_j,\vx_l\rangle} $ with the help of
an orthonormal basis in the null space of the  Gram matrix of $X$ (see also \cite{Dels}). We borrow their
idea of  considering the null space of the Gram matrix of $X$. Noting the
corresponding null space in our case is a one-dimensional subspace in $\R^N$, we pick
a unit vector $\vy$ in this  subspace,  showing that we can use the coordinates of
$\vy$ to provide an estimation on the value of $\FP_{p, d+1,d}(X)$.
\begin{proof}[Proof of Theorem \ref{T1}]
(i)
Note that
$
\FP_{p,d+1,d}(L_k^d)= (k+1)k^{1-p}
$.
To this end, it is enough to show that
$
\FP_{p,d+1,d}(X)\geq (k+1)k^{1-p}
$
when $p\in(p_{k-1},p_k)$ and $L_k^d$ is the unique minimizer for each $k\in\{1,2,\ldots,d\}$.

Recall that $X=\{\vx_i\}_{i=1}^{d+1}\subset \R^d$ is a set of $d+1$ unit-norm vectors. Set
\[
G=(\langle \vx_i,\vx_j \rangle)\,\,\in\,\, \R^{(d+1)\times(d+1)}.
 \]
Note that $\rank ( G)\leq d$. Thus, there exists a unit vector $\vy=(y_1,\ldots,y_{d+1})^T\in\R^{d+1}$ such that $G\vy=0$. We compute the value of $(i,i)$-entry of the matrix $G\vy\vy^T$ and obtain
\begin{equation*}
0=(G\vy\vy^T)_{i,i}=\sum\limits_{j=1}^{d+1}\langle \vx_i,\vx_j \rangle\cdot y_iy_j=y_i^2+\sum\limits_{j\neq i}\langle \vx_i,\vx_j \rangle\cdot y_iy_j,
\end{equation*}
which implies
\begin{equation*}
y_i^2=|\sum\limits_{j\neq i}\langle \vx_i,\vx_j \rangle\cdot y_iy_j|\leq \sum\limits_{j\neq i}|\langle \vx_i,\vx_j \rangle|\cdot |y_i||y_j|.
\end{equation*}
Summing up the above inequality from $1$ to $d+1$, we obtain
\begin{equation*}
1\,\,=\,\,\sum\limits_{i=1}^{d+1}y_i^2\,\,\leq\,\, \sum\limits_{i=1}^{d+1}\sum\limits_{j\neq i}|\langle \vx_i,\vx_j \rangle|\cdot |y_i||y_j|.
\end{equation*}

We next present a proof for (i)
with dividing  the  proof into two cases:

{\em Case 1: $p\in (0,1]$.}
Note that $(0,1]\subset  (p_0,p_1)$. It is enough to prove that the unique solution to
$
\argmin{X\in S(d+1,d)}\FP_{p,d+1,d}(X)
$
 is $X=L_1^d$ for any $p\in(0,1]$.
 We first consider the case where $p=1$.
Since
\begin{equation*}
|y_i||y_j |\,\,\leq\,\, \frac{y_i^2+y_j^2}{2}\,\,\leq\,\,\frac{1}{2},\,\, \text{ for all } i\neq j
\end{equation*}
we obtain
\begin{equation*}
1\leq \sum\limits_{i=1}^{d+1}\sum\limits_{j\neq i}|\langle \vx_i,\vx_j \rangle|\cdot |y_i||y_j|\leq \frac{1}{2}\cdot\sum\limits_{i=1}^{d+1}\sum\limits_{j\neq i}|\langle \vx_i,\vx_j \rangle|,
\end{equation*}
which implies
\begin{equation}\label{p1}
\sum\limits_{i=1}^{d+1}\sum\limits_{j\neq i}|\langle \vx_i,\vx_j \rangle|\,\,\geq\,\, 2.
\end{equation}
The equality in (\ref{p1}) holds if and only if there exist $i_1,i_2\in \{1,2,\ldots,d+1\}$ with $i_1\neq i_2$ such that $|\langle \vx_{i_1},\vx_{i_2} \rangle|=1$ and the remaining terms in the sum $\sum\limits_{i=1}^{d+1}\sum\limits_{j\neq i}|\langle \vx_i,\vx_j \rangle|$ are all zero. We arrive at the conclusion.

We next turn to the case $p\in(0,1)$. Noting $\absinn{ \vx_i,\vx_j }\leq 1$, we have
\begin{equation*}
|\langle \vx_i,\vx_j \rangle|^p\geq |\langle \vx_i,\vx_j \rangle|,  \ \forall i\neq j
\end{equation*}
for any $p\in(0,1)$.
Thus,
\begin{equation}\label{11}
\sum\limits_{i=1}^{d+1}\sum\limits_{j\neq i}|\langle \vx_i,\vx_j \rangle|^p\geq \sum\limits_{i=1}^{d+1}\sum\limits_{j\neq i}|\langle \vx_i,\vx_j \rangle|\geq 2.
\end{equation}
The equality holds if and only if $|\langle \vx_i,\vx_j \rangle|=0$ or 1 for any distinct $i,j$. Thus, the minimizer of $1$-frame potential is also the unique minimizer of $p$-frame potential for any $p\in(0,1)$.

{\em Case 2: $1<p<2$.}
For  $1<p< 2$,  we use H$\ddot{\text{o}}$lder's inequality to obtain
\begin{equation}\label{Holder}
1\leq \sum\limits_{i=1}^{d+1}\sum\limits_{j\neq i}|\langle \vx_i,\vx_j \rangle|\cdot |y_i||y_j|\leq (\sum\limits_{i=1}^{d+1}\sum\limits_{j\neq i}|\langle \vx_i,\vx_j \rangle|^p)^{\frac{1}{p}}(\sum\limits_{i=1}^{d+1}\sum\limits_{j\neq i}|y_i|^q|y_j |^q)^{\frac{1}{q}}
\end{equation}
where $q>2$ satisfies $\frac{1}{p}+\frac{1}{q}=1$. The second equality in (\ref{Holder}) holds if and only if there exists a constant $c\in\R$ such that
\begin{equation}\label{Holder2}
c\cdot |\langle \vx_i,\vx_j \rangle|^{p-1}\,\, =\,\,|y_i||y_j|, \text{ for all } i\neq j.
\end{equation}
Equation (\ref{Holder}) implies
\begin{equation}\label{13}
\FP_{p,d+1,d}(X)\geq \frac{1}{(\sum\limits_{i=1}^{d+1}\sum\limits_{j\neq i}|y_i|^q|y_j |^q)^{\frac{p}{q}}}.
\end{equation}

Let $\alpha=\frac{q}{2}$ and $z_i=|y_i|^2$ for  $i=1,2,\ldots,d+1$. Then we
 can rewrite the inequality in (\ref{13}) as
\begin{equation}\label{eq:eq1}
\FP_{p,d+1,d}(X)\,\,\geq\,\, \frac{1}{(M_{\alpha,d+1}(z_1,\ldots,z_{d+1}))^{\frac{p}{q}}},
\end{equation}
where $M_{\alpha,d+1}(z_1,\ldots,z_{d+1})=\sum\limits_{i=1}^{d+1}\sum\limits_{j\neq i}z_i^{\alpha}z_j^{\alpha}$, $z_1+\cdots+z_{d+1}=1, z_i\geq 0, i=1,\ldots,d+1$.

Note that $\alpha=\frac{q}{2}=\frac{1}{2}+\frac{1}{2}\frac{1}{p-1}$. If $p\in(p_{k-1},p_k)\cap(1,2)$ where $k\in\{1,\ldots,d\}$, then $\alpha\in (a_{k},a_{k-1})$. Here, $a_k$ is defined in (\ref{eq:ak}). According to Lemma \ref{M}, $M_{\alpha,d+1}(z_1,\ldots,z_{d+1})$ attains its maximum, $k(k+1)^{1-2\alpha}$, only when $z_i=\frac{1}{k+1}$ for  $i=1,\ldots,k+1$ and $z_i=0$ for $i\geq k+2$. Thus, we obtain
\begin{equation}\label{14}
\FP_{p,d+1,d}(X) \,\,\geq\,\, \frac{1}{(k(k+1)^{1-2\alpha})^{\frac{p}{q}}}=(k+1)k^{1-p}
\end{equation}
when $p\in(p_{k-1},p_k)\cap(1,2)$, $k=1,\ldots,d$. Combining with equation (\ref{Holder2}), the equality in (\ref{14}) holds if and only if
for $i\neq j$
\begin{equation}\label{eq1:ak}
 \absinn{\vx_i,\vx_j}=\left\{
        \begin{array}{cl}
        \frac{1}{k} & i,j\in \{1,\ldots,k+1\}\\
         0 & else
        \end{array}
      \right. ,
\end{equation}
which implies that $X=L_k^d$.
Combining the analysis in {\em Case 1}, we arrive at the conclusion (i) for Theorem \ref{T1}.

(ii) Note that
$
\FP_{p,d+1,d}(L_k^d)=\FP_{p,d+1,d}(L_{k+1}^d)= (k+1)k^{1-p}
$
when $p=p_k$, $k=1,2,\ldots, d-1$. To this end, it is enough to prove that $\FP_{p_k,d+1,d}(X) \,\,\geq\,\, (k+1)k^{1-p_k}$ and that the minimizers are $L_{k}^d$ and $L_{k+1}^d$. Since $p_k\in(1,2)$ for each $k\in\{1,2,\ldots,d-1\}$, we follow our analysis for the proof of (i).

If $p=p_k$ where $k\in\{1,\ldots,d-1\}$, then $\alpha$ in (\ref{eq:eq1}) is equal to $a_k$. According to Lemma \ref{M}, $M_{a_k,d+1}(z_1,\ldots,z_{d+1})$ attains its maximum, which is $k(k+1)^{1-2a_k}$, at exactly two points:
$\left(\underbrace{\frac{1}{k+1},\ldots,\frac{1}{k+1}}_{k+1},\underbrace{0,\ldots,0}_{d-k}\right)$ and
$\left(\underbrace{\frac{1}{k+2},\ldots,\frac{1}{k+2}}_{k+2},\underbrace{0,\ldots,0}_{d-k-1}\right)$. Thus, we obtain
\begin{equation}\label{144}
\FP_{p_k,d+1,d}(X) \,\,\geq\,\, \frac{1}{(k(k+1)^{1-2a_k})^{\frac{p_k}{2\cdot a_k}}}=(k+1)k^{1-p_k}
\end{equation}
for $k=1,2,\ldots,d-1$.
According to  (\ref{Holder2}), the equality in (\ref{144}) holds if and only if $X=L_k^d$ or $L_{k+1}^d$, which implies the conclusion (ii).

\end{proof}

\begin{remark}
For convenience, we state Theorem \ref{T1} and its proof for the real case.
In fact, it is easy to extend the result in Theorem \ref{T1} to complex case.
Moreover, the method which is employed to prove Theorem \ref{T1}  can be used to estimate the matrix potential, i.e. $\sum\limits_{i\neq j}|A_{i,j}|^p$, where $A_{i,j}$ is the $(i,j)$-entry of any  matrix $A\in \C^{(d+1)\times (d+1)}$ whose rank is $d$ and diagonal elements are equal to 1 (see \cite{Glazyrin2}).
\end{remark}

\section{Proof of Lemma $\ref{M}$}

In this section we present the proof of  Lemma \ref{M}.
We begin with introducing  the following lemma, which portrays the main feature of local extrema for (\ref{con0}).

For convenience, we set
\begin{equation}\label{fm}
f_{m_1,\alpha,d+1}(t):=M_{\alpha,d+1}\left(\underbrace{t,\ldots,t}_{m_1},\underbrace{s,\ldots,s}_{d+1-m_1}
\right)=(m_1\cdot t^{\alpha}+(d+1-m_1)\cdot s^{\alpha})^2-(m_1\cdot t^{2\alpha}+(d+1-m_1)\cdot s^{2\alpha}),
\end{equation}
where $s:=\frac{1-m_1t}{d+1-m_1}, m_1\in [1,\frac{d+1}{2}]\cap \Z$.

\begin{lemma}\label{L1}
Assume that $(w_1,\ldots,w_{d+1})$ is a local maxima of  $M_{\alpha,d+1}(z_1,\ldots,z_{d+1})$ subject to the constraints in (\ref{con0}) and $w_i>0$ for each $i\in\{1,2,\dots,d+1\}$. Then
\begin{enumerate}[{\rm (i)}]
\item The maxima $(w_1,\ldots,w_{d+1})$ is in the form
$\left(\underbrace{t_0,\ldots,t_0}_{m_1},\underbrace{s_0,\ldots,s_0}_{d+1-m_1}\right)$ up to a permutation where $m_1\in [1,\frac{d+1}{2}]\cap \Z, t_0\in (0,\frac{1}{m_1})$ and $s_0=\frac{1-m_1t_0}{d+1-m_1}$.
\item  The value $t_0$ is a local maxima of $f_{m_1,\alpha,d+1}(t)$.
\end{enumerate}
\end{lemma}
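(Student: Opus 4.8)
The plan is to exploit the closed form $M_{\alpha,d+1}(z_1,\dots,z_{d+1}) = \big(\sum_{k} z_k^{\alpha}\big)^2 - \sum_{k} z_k^{2\alpha}$ together with the Lagrange condition at an interior local maximum. Write $S := \sum_{k=1}^{d+1} w_k^{\alpha}$. Since the maximizer lies in the relative interior of the feasible simplex (all $w_i>0$), the inequality constraints are inactive and the only active constraint is $\sum_i z_i = 1$, so there is a multiplier $\lambda$ with $\partial_{z_i} M = \lambda$ for every $i$. A direct computation gives $\partial_{z_i} M = 2\alpha\, z_i^{\alpha-1}\big(S - z_i^{\alpha}\big)$, so the stationarity equations become
\[
 w_i^{\alpha-1}\big(S - w_i^{\alpha}\big) = c \qquad (i = 1,\dots,d+1),
\]
where $c := \lambda/(2\alpha)$ and $S$ is a fixed positive constant determined by the configuration.

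The heart of the argument --- and the step I expect to be the main obstacle --- is to turn this into the assertion that the coordinates take at most two distinct values. Consider $g(z) := z^{\alpha-1}(S - z^{\alpha}) = S z^{\alpha-1} - z^{2\alpha-1}$ on $(0,\infty)$. Its derivative $g'(z) = z^{\alpha-2}\big(S(\alpha-1) - (2\alpha-1)z^{\alpha}\big)$ changes sign exactly once on $(0,\infty)$ because $\alpha>1$ forces both $\alpha-1>0$ and $2\alpha-1>0$: thus $g$ is strictly increasing on $(0,z^{*})$ and strictly decreasing on $(z^{*},\infty)$, where $z^{*} = \big(S(\alpha-1)/(2\alpha-1)\big)^{1/\alpha}$. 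Hence $g$ is strictly unimodal, so the level set $\{z>0 : g(z)=c\}$ contains at most two points. Since every $w_i$ solves $g(w_i)=c$, the multiset $\{w_1,\dots,w_{d+1}\}$ has at most two distinct values. (Here $c>0$ automatically, since $S > w_i^{\alpha}$ as all $w_k>0$.)

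To complete (i), I would relabel the coordinates so that the less frequent value is repeated $m_1$ times; this yields $m_1 \in [1,\frac{d+1}{2}]\cap\Z$ and puts the maximizer, up to permutation, in the form $(\underbrace{t_0,\dots,t_0}_{m_1},\underbrace{s_0,\dots,s_0}_{d+1-m_1})$ with $s_0 = \frac{1-m_1 t_0}{d+1-m_1}$ (when all $w_i$ coincide, simply take $m_1=1$). The bound $m_1\le\frac{d+1}{2}$ guarantees $d+1-m_1\ge 1$, so at least one $s_0$-coordinate is present; since $s_0=w_i>0$, the constraint gives $m_1 t_0 = 1-(d+1-m_1)s_0 < 1$, i.e.\ $t_0\in(0,1/m_1)$, as required.

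Finally, part (ii) follows from the elementary principle that the restriction of a local maximum to a smooth curve through the point is still a local maximum of the restriction. The map $t \mapsto \big(\underbrace{t,\dots,t}_{m_1},\underbrace{s(t),\dots,s(t)}_{d+1-m_1}\big)$ with $s(t)=\frac{1-m_1 t}{d+1-m_1}$ traces a curve lying inside the feasible simplex for all $t$ near $t_0$ (both $t$ and $s(t)$ remain positive, since $t_0\in(0,1/m_1)$ forces $s_0>0$), passes through the interior local maximum at $t=t_0$, and by \eqref{fm} satisfies $M_{\alpha,d+1}=f_{m_1,\alpha,d+1}(t)$ along it. Consequently $f_{m_1,\alpha,d+1}$ attains a local maximum at $t_0$, which is exactly (ii).
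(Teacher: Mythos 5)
Your proposal is correct and follows essentially the same route as the paper's proof: interior stationarity (the paper phrases it via KKT with the multipliers $\mu_i$ vanishing, you via a plain Lagrange multiplier, which is equivalent when all $w_i>0$), then the observation that the $w_i$ all lie on a level set of a strictly unimodal one-variable function — your $g(z)=Sz^{\alpha-1}-z^{2\alpha-1}$ is just the paper's $f(x)=x^{\alpha}+\frac{\lambda}{2\alpha}x^{1-\alpha}$ with the roles of the constants $S$ and $\lambda$ exchanged — so at most two distinct values occur, and finally the restriction-to-a-feasible-curve argument for (ii). Your write-up is if anything slightly more explicit than the paper's about why the curve stays in the simplex and why relabeling gives $m_1\le\frac{d+1}{2}$ and $t_0\in\left(0,\frac{1}{m_1}\right)$.
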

\begin{proof}
(i) We claim that  $w_1,\ldots,w_{d+1}$ can take at most two different values. Note that
$M_{\alpha,d+1}(z_1,\ldots,z_{d+1})$  is a symmetric function on $z_1,\ldots,z_{d+1}$. Hence, up to a permutation, we can write $(w_1,\ldots,w_{d+1})$ as $\left(\underbrace{t_0,\ldots,t_0}_{m_1},\underbrace{s_0,\ldots,s_0}_{d+1-m_1}\right)$ for some $t_0\in(0,\frac{1}{m_1})$ and $s_0=\frac{1-m_1t_0}{d+1-m_1}$. It remains to prove the claim.
Set $r_0(z_1,\ldots,z_{d+1}):=z_1+\cdots+z_{d+1}-1$ and
\[
r_i(z_1,\ldots,z_{d+1}):=-z_i,\quad i=1,2,\ldots,d+1.
\]
Since $(w_1,\ldots,w_{d+1})$ is a local extreme point, according to  KKT conditions,  there exist constants $\lambda$ and $\mu_i, i=1,2,\ldots,d+1$, which are called KKT multipliers, such that the followings hold:
\begin{subequations}
\begin{align}
\nabla M_{\alpha,d+1}(w_1,\ldots,w_{d+1})&=\lambda\nabla r_0(w_1,\ldots,w_{d+1})+\sum\limits_{i=1}^{d+1}\mu_i\nabla r_i(w_1,\ldots,w_{d+1})\label{kkt1}\\
r_0(w_1,\ldots,w_{d+1})&=0\label{kkt2}\\
r_i(w_1,\ldots,w_{d+1})&\leq 0,  i=1,2,\ldots,d+1\label{kkt3}\\
\mu_ir_i(w_1,\ldots,w_{d+1})&=0,  i=1,2,\ldots,d+1\label{kkt4}\\
\mu_i&\geq 0, i=1,2,\ldots,d+1.\label{kkt5}
\end{align}
\end{subequations}

Combining $w_i>0$ and  (\ref{kkt4}), we can obtain  $\mu_i=0, i=1,2,\ldots,d+1$.
Substituting $\mu_i=0$ into (\ref{kkt1}), we obtain
\begin{equation}\label{23}
2\alpha\cdot w_i^{\alpha-1}((w_1^\alpha+\cdots+w_{d+1}^\alpha)-w_i^\alpha)=\lambda, \quad i=1,\ldots,d+1,
\end{equation}
which implies that $\lambda>0$ and
\[
\frac{\lambda}{2\alpha w_i^{\alpha-1}}+w_i^\alpha=w_1^\alpha+\cdots+w_{d+1}^\alpha,\quad i=1,\ldots,d+1.
\]
Hence, we obtain
\begin{equation}\label{eq:fw}
f(w_1)\,=\,f(w_2)\,=\,\cdots\,=\,f(w_{d+1})\,>\,0
\end{equation}
where $f(x):=x^\alpha+\frac{\lambda}{2\alpha}\cdot \frac{1}{x^{\alpha-1}}$.
Set $w_0:=(\frac{\alpha-1}{2\alpha^2}\cdot \lambda)^{\frac{1}{2\alpha-1}}$.
 Noting that $f'(x)=\alpha x^{\alpha-1}-\frac{\lambda (\alpha-1)}{2\alpha}x^{-\alpha}$,
 we obtain that $f'(x)<0, x\in (0,w_0)$, $f'(w_0)=0$ and $f'(x)>0, x\in (w_0,\infty)$,
 which implies that, for any $c\in \R$, the  cardinality of the set $\{x : f(x)=c, x>0\}$ is less than or equal to $2$ .
  Hence, equation (\ref{eq:fw}) implies that  $w_1,\ldots,w_{d+1}$ can take at most two different values.

(ii) Combining
\[
f_{m_1,\alpha,d+1}(t)=M_{\alpha,d+1}\left(\underbrace{t,\ldots,t}_{m_1},\underbrace{s,\ldots,s}_{d+1-m_1}
\right)
\]
with $\left(\underbrace{t_0,\ldots,t_0}_{m_1},\underbrace{s_0,\ldots,s_0}_{d+1-m_1}\right)$ being a local maxima of $M_{\alpha,d+1}\left(\underbrace{t,\ldots,t}_{m_1},\underbrace{s,\ldots,s}_{d+1-m_1}\right)$, we obtain the conclusion immediately.
\end{proof}

\begin{lemma}\label{L4}
 Let $m_1\in[1,\frac{d+1}{2}]\cap \Z$ and $m_2=d+1-m_1$ where $d\geq 2$ is an integer. Set
\[
h(x):=(m_2-1)x^{4\alpha-2}-m_2\cdot x^{2\alpha}+m_1\cdot x^{2\alpha-2}-(m_1-1)
\]
where $\alpha>1$. We use $h'(x)$ to denote the derivative with respect to $x$ of
$h(x)$.
 Then
\begin{enumerate}[{\rm (i)}]
\item The function $h'(x)$ has at most two zeros on $(0,\infty)$, and hence $h(x)$ has at most two extreme points on $(0,\infty)$;
\item  If $\alpha< 1+\frac{1}{d-1}$, then there exist $\hat{x}_1\in(0,1)$, $\hat{x}_2\in(1,\infty)$ such that $h'(x)>0$ for $x\in (0,\hat{x}_1)\cup(\hat{x}_2,\infty)$ and $h'(x)<0$ for $x\in (\hat{x}_1,\hat{x}_2)$;
\item If $\alpha\geq 1+\frac{1}{d-1}$, then $h(x)$ is positive and monotonically increasing on $(1,\infty)$;

\item If $\alpha=1+\frac{1}{d-1}$ and $m_1=\frac{d+1}{2}$, then $h(x)$ is monotonically increasing on $(0,\infty)$;

\item If $\alpha=1+\frac{1}{d-1}$ and $m_1<\frac{d+1}{2}$, then there exists $\hat{x}_3\in(0,1)$ such that $h'(x)>0$ for  $x\in (0,\hat{x}_3)\cup(1,\infty)$ and $h'(x)<0$ for $x\in (\hat{x}_3,1)$.

\end{enumerate}

\end{lemma}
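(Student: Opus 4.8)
The plan is to reduce all five statements to a sign analysis of a single auxiliary function. Differentiating $h$ and factoring out the positive quantity $x^{2\alpha-3}$, I would write, for $x>0$,
\[
h'(x)=x^{2\alpha-3}g(x),\qquad g(x):=(m_2-1)(4\alpha-2)x^{2\alpha}-2\alpha m_2 x^2+m_1(2\alpha-2),
\]
so that on $(0,\infty)$ the sign of $h'$ coincides with the sign of $g$. The first observation is that every coefficient of $g$ is positive: since $d\geq 2$ and $m_1\leq\frac{d+1}{2}$ we get $m_2=d+1-m_1\geq 2$, so $m_2-1>0$, while $4\alpha-2$, $2\alpha m_2$ and $m_1(2\alpha-2)$ are all positive because $\alpha>1$. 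In particular $g(0^+)=m_1(2\alpha-2)>0$ and $g(+\infty)=+\infty$.

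For part (i) I would show that $g$ is strictly quasi-convex. From
\[
g'(x)=2x\bigl(\alpha(m_2-1)(4\alpha-2)x^{2\alpha-2}-2\alpha m_2\bigr)
\]
and $2\alpha-2>0$, the bracketed factor is strictly increasing in $x$ and vanishes at a unique point $x^\ast>0$; hence $g$ strictly decreases on $(0,x^\ast)$ and strictly increases on $(x^\ast,\infty)$. A function that strictly decreases then strictly increases has at most two zeros, so $h'$ has at most two zeros and $h$ at most two extreme points on $(0,\infty)$.

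Next I would record three landmark facts. A direct substitution gives $h(1)=0$ and
\[
g(1)=2\bigl(\alpha(d-1)-d\bigr),
\]
so $g(1)$ is negative, zero, or positive exactly when $\alpha$ is smaller than, equal to, or larger than $1+\frac{1}{d-1}$. Solving $g'(x^\ast)=0$ yields $(x^\ast)^{2\alpha-2}=\frac{m_2}{(m_2-1)(2\alpha-1)}$, whence $x^\ast\leq 1$ if and only if $\alpha\geq\frac{2m_2-1}{2(m_2-1)}$. The decisive step is to compare this last threshold with $1+\frac{1}{d-1}$: cross-multiplying and using $m_2=d+1-m_1$ reduces the inequality $\frac{2m_2-1}{2(m_2-1)}\leq 1+\frac{1}{d-1}$ to $d+1-2m_1\geq 0$, which holds precisely because $m_1\leq\frac{d+1}{2}$, with equality if and only if $m_1=\frac{d+1}{2}$.

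With these facts in hand the five cases follow by locating $x^\ast$ relative to $1$ and reading off the sign changes of $g$. When $\alpha<1+\frac{1}{d-1}$ we have $g(1)<0$, so the minimum value $g(x^\ast)$ is negative and the two zeros straddle $1$, giving $\hat{x}_1\in(0,1)$, $\hat{x}_2\in(1,\infty)$ and the sign pattern of (ii). When $\alpha\geq 1+\frac{1}{d-1}$ the decisive inequality forces $x^\ast\leq 1$ while $g(1)\geq 0$, so $g$ is increasing and positive on $(1,\infty)$; together with $h(1)=0$ this yields (iii). At the boundary $\alpha=1+\frac{1}{d-1}$ one has $g(1)=0$, and the subcases split according to the equality condition above: if $m_1=\frac{d+1}{2}$ then $x^\ast=1$ is the minimizer, so $g\geq 0$ everywhere with equality only at $x=1$ and $h$ is monotone on $(0,\infty)$, which is (iv); if $m_1<\frac{d+1}{2}$ then $x^\ast<1$, so $g$ dips below zero on an interval terminating at $1$ and has a single additional zero $\hat{x}_3\in(0,x^\ast)\subset(0,1)$, producing the sign pattern of (v). I expect part (iii) to be the main obstacle: the positivity and monotonicity of $g$ on $(1,\infty)$ do not follow from $g(1)\geq 0$ alone, since a priori $g$ could still dip below zero to the right of $1$; what rules this out is exactly the location of the minimizer $x^\ast\leq 1$, i.e. the comparison $\frac{2m_2-1}{2(m_2-1)}\leq 1+\frac{1}{d-1}$, which is where the hypothesis $m_1\leq\frac{d+1}{2}$ enters essentially and which also cleanly separates the two boundary subcases (iv) and (v).
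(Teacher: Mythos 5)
Your proof is correct and follows essentially the same route as the paper: both factor $h'(x)=x^{2\alpha-3}g(x)$ (the paper's $h_1$), use the unique minimizer of $g$ to get the decreasing-then-increasing shape for (i), and read off the five cases from the sign of $g(1)=2(\alpha(d-1)-d)$ and the location of the minimizer relative to $1$. The only cosmetic difference is that the paper checks $x^\ast\leq 1$ by bounding its explicit formula directly with $m_2\geq\frac{d+1}{2}$ and $\alpha\geq 1+\frac{1}{d-1}$, whereas you recast this as the threshold comparison $\frac{2m_2-1}{2(m_2-1)}\leq 1+\frac{1}{d-1}\iff d+1-2m_1\geq 0$ --- an equivalent computation (and your parenthetical that the $x^2$-coefficient of $g$ is ``positive'' should read that its magnitude $2\alpha m_2$ is positive, which is how you in fact use it).
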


\begin{proof}
We split the proof, proving each claim separately.

(i) By computation, we have
\begin{equation}\label{26}
h'(x)=h_1(x)\cdot x^{2\alpha-3},
\end{equation}
where $h_1(x)=(4\alpha-2)\cdot (m_2-1)x^{2\alpha}-2\alpha\cdot m_2\cdot x^2+(2\alpha-2)\cdot m_1$.

Set
\begin{equation}\label{x0}
x_0:=\left(\frac{m_2}{(2\alpha-1)\cdot (m_2-1)}\right)^{\frac{1}{2\alpha-2}}.
\end{equation}
Noting that $h_1'(x)<0, x\in (0,x_0)$, $h_1'(x)>0, x\in (x_0,\infty)$ and $h_1'(x_0)=0$, $h_1(x)=0$ has at most two distinct  solutions on $(0,\infty)$. According to (\ref{26}),  $h'(x)=0$ also has at most two distinct solutions on $(0,\infty)$, which implies the conclusion.

(ii) When $\alpha< 1+\frac{1}{d-1}$, we obtain $h_1(1)=2\alpha(d-1)-2d<0$. Then we have
\begin{equation}\label{hh1}
\inf\limits_{x>0} h_1(x)=h_1(x_0)\leq h_1(1)<0.
\end{equation}
Observing that $m_2>1$ and $\alpha>1$, we obtain
\begin{equation}\label{hh2}
h_1(0)=(2\alpha-2)\cdot m_1>0
\end{equation}
\begin{equation}\label{hh3}
\lim\limits_{x\to+\infty} h_1(x)=+\infty
\end{equation}
Thus, combining (\ref{hh1}), (\ref{hh2}) and (\ref{hh3}), we  obtain that $h_1(x)=0$ has exactly two solutions $\hat{x}_1$, $\hat{x}_2$, where $\hat{x}_1\in(0,1)$, $\hat{x}_2\in(1,\infty)$. By the monotonicity of $h_1(x)$, we also know that $h_1(x)<0$, $x\in(\hat{x}_1,\hat{x}_2)$ and $h_1(x)>0$, $x\in(0,\hat{x}_1)\cup(\hat{x}_2,\infty)$. According to (\ref{26}),  we obtain that $h'(x)<0$, $x\in(\hat{x}_1,\hat{x}_2)$  and $h'(x)>0$, $x\in(0,\hat{x}_1)\cup(\hat{x}_2,\infty)$.

(iii)
Note that
\begin{equation}
x_0\,=\,\left(\frac{1}{(2\alpha-1)\cdot (1-\frac{1}{m_2})}\right)^{\frac{1}{2\alpha-2}}\,\leq\,\left(\frac{1}{(1+\frac{2}{d-1})\cdot (1-\frac{2}{d+1})}\right)^{\frac{1}{2\alpha-2}} \,=\,1
\end{equation}
where we use $m_2=d+1-m_1\geq\frac{d+1}{2}$ and  $\alpha\geq 1+\frac{1}{d-1}$.
 So the function $h_1(x)$ is monotonically increasing when $x>1$. Noting that $h_1(1)=2\alpha(d-1)-2d\geq 0$, we have $h_1(x)>0$ on $(1,\infty)$, which implies that $h(x)$ is monotonically increasing on $(1,\infty)$. Since $h(1)=0$, we conclude that $h(x)>0$ when $x>1$.

(iv)
When $\alpha=1+\frac{1}{d-1}$ and $m_1=\frac{d+1}{2}$, we have $h_1(1)=0$ and $x_0=1$  from (\ref{x0}), which implies that $h_1(x_0)=0$. Since $x_0$ is the minimum point of $h_1(x)$, we obtain $h_1(x)\geq 0$ on $(0,\infty)$. Finally, from (\ref{26}) we see that $h'(x)\geq 0$ on $(0,\infty)$, which implies the conclusion.

(v)
Noting that $x_0\neq 1$ provided  $\alpha=1+\frac{1}{d-1}$ and $m_1<\frac{d+1}{2}$, we have
\begin{equation}\label{hhh1}
\inf\limits_{x>0} h_1(x)=h_1(x_0)< h_1(1)=2\alpha(d-1)-2d=0.
\end{equation}
Since $\alpha=1+\frac{1}{d-1}$, from (iii) we have that $h(x)$ is monotonically increasing on $(1,\infty)$. Noting that (\ref{hh2}) and (\ref{hh3}) also hold for $\alpha=1+\frac{1}{d-1}$, we conclude that $h_1(x)=0$ has exactly two solutions $\hat{x}_3$ and 1, where $\hat{x}_3\in(0,1)$. From (\ref{26}),  we obtain that $h'(x)<0$, for $x\in(\hat{x}_3,1)$ and $h'(x)>0$, for $x\in(0,\hat{x}_3)\cup(1,\infty)$.

\end{proof}

We next study the local maxima of  $f_{m_1,\alpha,d+1}(t)$ for each $m_1\in[1,\frac{d+1}{2}]\cap \Z$ and $\alpha\in (1,1+\frac{1}{d-1}]$.  The following lemma shows that if $1<\alpha\leq1+\frac{1}{d-1}$, then $f_{m_1,\alpha,d+1}(t)$ attains a local maximum at $t_0$ only if $t_0\in\{0,\frac{1}{d+1},\frac{1}{m_1}\}$.

\begin{lemma}\label{L2}

Assume $d\geq 2$ is an integer and $m_1\in[1,\frac{d+1}{2}]\cap \Z$.

\begin{enumerate}[{\rm (i)}]
\item Assume that $1<\alpha<1+\frac{1}{d-1}$, $t_0\in [0,\frac{1}{m_1}] $
and $f_{m_1,\alpha,d+1}(t)$ has a local maximum at $t_0$. Then
$
t_0\in \left\{ 0, \frac{1}{d+1}, \frac{1}{m_1}\right\}.
$

\item Assume that $\alpha=1+\frac{1}{d-1}$, $t_0\in [0,\frac{1}{m_1}] $
and $f_{m_1,\alpha,d+1}(t)$ has a local maximum at $t_0$. Then
$
t_0\in \left\{ 0,  \frac{1}{m_1}\right\}.
$
\end{enumerate}

\end{lemma}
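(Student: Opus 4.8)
The plan is to reduce everything to the sign analysis of $h$ already carried out in Lemma \ref{L4}. First I would differentiate $f_{m_1,\alpha,d+1}$ directly. Writing $m_2=d+1-m_1$ and $s=\frac{1-m_1t}{m_2}$ (so that $\frac{ds}{dt}=-\frac{m_1}{m_2}$) and abbreviating $A=m_1t^\alpha+m_2s^\alpha$, the chain rule gives $A'=\alpha m_1(t^{\alpha-1}-s^{\alpha-1})$ and $(m_1t^{2\alpha}+m_2s^{2\alpha})'=2\alpha m_1(t^{2\alpha-1}-s^{2\alpha-1})$. Hence, on the interior $(0,\frac{1}{m_1})$ where $t,s>0$,
\[
f_{m_1,\alpha,d+1}'(t)=2\alpha m_1\Big[(m_1t^\alpha+m_2s^\alpha)(t^{\alpha-1}-s^{\alpha-1})-(t^{2\alpha-1}-s^{2\alpha-1})\Big].
\]

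The key step is to recognize the bracketed expression as a rescaling of $h$. Expanding it, dividing by $t^{2\alpha-1}$ and writing $y=s/t$, the bracket becomes $-\big[(m_2-1)y^{2\alpha-1}-m_2y^\alpha+m_1y^{\alpha-1}-(m_1-1)\big]$; substituting $y=x^2$ recovers exactly the $h$ of Lemma \ref{L4} (with the same $m_1,m_2$), so that
\[
f_{m_1,\alpha,d+1}'(t)=-2\alpha m_1\,t^{2\alpha-1}\,h(x),\qquad x:=\sqrt{s/t}.
\]
Since $s/t=\frac{1-m_1t}{m_2t}$ is strictly decreasing in $t$, the map $t\mapsto x$ is a strictly decreasing bijection from $(0,\frac{1}{m_1})$ onto $(0,\infty)$ sending $t=\frac{1}{d+1}$ (the point $t=s$) to $x=1$. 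As $t^{2\alpha-1}>0$, the sign of $f'$ is opposite to that of $h(x)$, so an interior $t_0$ is a local maximum of $f_{m_1,\alpha,d+1}$ if and only if $h$ changes sign from positive to negative as $x$ increases through $x_0=\sqrt{s_0/t_0}$.

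It then remains to count the positive-to-negative sign changes of $h$ via Lemma \ref{L4}, together with the boundary data $h(1)=0$, $h(0)=-(m_1-1)\le 0$ and $h(x)\to+\infty$. In case (i), $1<\alpha<1+\frac{1}{d-1}$: by Lemma \ref{L4}(ii), $h$ increases on $(0,\hat x_1)\cup(\hat x_2,\infty)$ and decreases on $(\hat x_1,\hat x_2)$ with $\hat x_1\in(0,1)$, $\hat x_2\in(1,\infty)$; a positive-to-negative crossing can occur only where $h$ decreases, i.e.\ on $(\hat x_1,\hat x_2)$, where $h$ is strictly monotone with single zero $x=1$. Thus the unique such crossing is at $x=1$, giving the only interior local maximum at $t_0=\frac{1}{d+1}$, so any local maximum satisfies $t_0\in\{0,\frac{1}{d+1},\frac{1}{m_1}\}$. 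In case (ii), $\alpha=1+\frac{1}{d-1}$: if $m_1=\frac{d+1}{2}$, then by Lemma \ref{L4}(iv) $h$ is increasing with its only zero at $x=1$ a negative-to-positive crossing; if $m_1<\frac{d+1}{2}$, then Lemma \ref{L4}(iii),(v) give $h>0$ on $(1,\infty)$ and $h\ge 0$ on $(\hat x_3,\infty)$ with the zero at $x=1$ a mere touching point. In neither subcase does $h$ cross from positive to negative, so $f_{m_1,\alpha,d+1}$ has no interior local maximum and $t_0\in\{0,\frac{1}{m_1}\}$.

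The main obstacle is the factorization in the second paragraph: isolating the precise change of variable $x=\sqrt{s/t}$ that turns $f'$ into $\mp h$ with identical parameters $m_1,m_2$, so that the delicate monotonicity from Lemma \ref{L4} transfers cleanly. Everything afterwards is sign bookkeeping, the only mild care being the degenerate subcase $m_1=1$ (where $h(0)=0$) and the touching zero at $x=1$ in case (ii), neither of which produces a positive-to-negative crossing.
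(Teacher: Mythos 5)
Your proof is correct and takes essentially the same approach as the paper's: the paper reparametrizes via $t=\frac{\cos^2\theta}{m_1}$, $s=\frac{\sin^2\theta}{m_2}$ and factors $g'(\theta)$ as a positive multiple of $h(v)$ with $v=\sqrt{s/t}$, while you differentiate in $t$ directly and arrive at $f_{m_1,\alpha,d+1}'(t)=-2\alpha m_1 t^{2\alpha-1}h\left(\sqrt{s/t}\right)$, which is the identical reduction up to the sign flip coming from $dt/d\theta<0$. The subsequent sign bookkeeping via Lemma \ref{L4} (including the $m_1=1$ degeneracy and the touching zero at $x=1$ when $\alpha=1+\frac{1}{d-1}$) matches the paper's case analysis exactly.
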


\begin{proof}

For convenience, let $m_2:=d+1-m_1>1$. Recall that
\begin{equation}
f_{m_1,\alpha,d+1}(t)=M_{\alpha,d+1}\left(\underbrace{t,\ldots,t}_{m_1},\underbrace{s,\ldots,s}_{m_2}
\right)=(m_1\cdot t^{\alpha}+m_2\cdot s^{\alpha})^2-(m_1\cdot t^{2\alpha}+m_2\cdot s^{2\alpha}),
\end{equation}
where  $s=\frac{1-m_1\cdot t}{m_2}$. Noting that $t,s\geq 0$ and $m_1\cdot t+m_2\cdot s=1$, we can set $t=\frac{\cos^2\theta}{m_1}$, $s=\frac{\sin^2\theta}{m_2}$, where $\theta\in[0,\frac{\pi}{2}]$. We use the substitution $t=\frac{\cos^2\theta}{m_1}, s=\frac{\sin^2\theta}{m_2}$ to transform the function from $f_{m_1,\alpha,d+1}(t)$ to
\begin{equation*}
g(\theta):=f_{m_1,\alpha,d+1}\left(\frac{\cos^2\theta}{m_1}\right)=\frac{m_1(m_1-1)}{m_1^{2\alpha}}(\cos\theta)^{4\alpha}+\frac{m_2(m_2-1)}{m_2^{2\alpha}}(\sin\theta)^{4\alpha}+\frac{2m_1m_2}{m_1^{\alpha}m_2^{\alpha}}(\cos\theta\sin\theta)^{2\alpha}.
\end{equation*}
To this end, it is enough to study the local maxima of $g$ on $[0,\frac{\pi}{2}]$.
A simple calculation shows that
\begin{equation}\label{g1}
\begin{aligned}
g'(\theta)=&-4\alpha\cdot\frac{m_1(m_1-1)}{m_1^{2\alpha}}(\cos\theta)^{4\alpha-1}\sin\theta+4\alpha\cdot\frac{m_2(m_2-1)}{m_2^{2\alpha}}(\sin\theta)^{4\alpha-1}\cos\theta\\
&+2\alpha\cdot\frac{2m_1m_2}{m_1^{\alpha}m_2^{\alpha}}(\cos\theta\sin\theta)^{2\alpha-1}(\cos^2\theta-\sin^2\theta).
\end{aligned}
\end{equation}
 We can rewrite $g'(\theta)$ as
\begin{equation}\label{g2}
g'(\theta)=4\alpha\cdot \frac{m_1}{m_1^{2\alpha}}\cdot (\cos\theta)^{4\alpha-1}\sin\theta\cdot h(v),
\end{equation}
where $v:=\sqrt{\frac{s}{t}}=\sqrt{\frac{m_1}{m_2}}\cdot \frac{\sin\theta}{\cos\theta}$ and $h(v):=(m_2-1)v^{4\alpha-2}-m_2\cdot v^{2\alpha}+m_1\cdot v^{2\alpha-2}-(m_1-1)$.
Particularly, when $\theta=\theta_*:=\arctan(\sqrt{\frac{m_2}{m_1}})$, we have $v=\sqrt{\frac{m_1}{m_2}}\cdot \frac{\sin\theta_*}{\cos\theta_*}=1$.

Noting  that $\alpha>1$, $m_1\geq 1$ and $m_2>1$,  we obtain
\[
h(0)=-(m_1-1)\leq0\label{h1},\,\,
h(1)=0,\,\,
\lim\limits_{v\to+\infty}h(v)=+\infty.
\]
Since $4\alpha\cdot \frac{m_1}{m_1^{2\alpha}}\cdot (\cos\theta)^{4\alpha-1}\sin\theta$ is positive for any $\theta\in(0,\frac{\pi}{2})$, to study the monotonicity of $g(\theta)$,
 it is enough to consider the sign of $h(v)$ with $v>0$.

(i)
First we consider the case $1<\alpha<1+\frac{1}{d-1}$.

Lemma $\ref{L4}$ shows that there exists $\hat{v}_1\in(0,1)$ and $\hat{v}_2\in(1,\infty)$ such that
$h'(v)>0$ for $v\in (0,\hat{v}_1)\cup(\hat{v}_2,\infty)$ and $h'(v)<0$ for $v\in (\hat{v}_1,\hat{v}_2)$.
Noting that $h(1)=0$ and $\hat{v}_1<1<\hat{v}_2$, we obtain that $h(\hat{v}_1)>0$ and $h(\hat{v}_2)<0$.
Combining Lemma \ref{L4} and the results above,  we obtain that $h(v)=0$ has exactly one solution on $[0,\hat{v}_1)$ , say $v_1$. Similarly,   $h(v)=0$ also has exactly one solution on $(\hat{v}_2,\infty)$, say   $v_2$. Let $\theta_1:=\arctan(v_1\sqrt{\frac{m_2}{m_1}})$ and $\theta_2:=\arctan(v_2\sqrt{\frac{m_2}{m_1}})$.

If $m_1=1$, then we have $h(0)=0$ and hence  $v_1=0$. From the monotonicity of $h(v)$, we obtain that $h(v)<0$, $v\in(1,v_2)$, $h(v)>0$, $v\in(0,1)\cup({v}_2,\infty)$ and $h(v)=0$, $v\in\{0,1,v_2\}$. Then from (\ref{g2}) it is easy to check that $g'(\theta)<0$, $\theta\in(\theta_*,\theta_2)$, $g'(\theta)>0$, $\theta\in(0,\theta_*)\cup(\theta_2,\frac{\pi}{2})$ and $g'(\theta)=0$, $\theta\in\{0,\theta_*,\theta_2,\frac{\pi}{2}\}$, which implies $g(\theta)$ has only two local maxima: $\theta_*$ and $\frac{\pi}{2}$.

If $m_1>1$, then $h(0)<0$, which means $v_1\in(0,\hat{v}_1)$.  Thus, by the monotonicity of $h(v)$ we conclude that $h(v)<0$, $v\in(0,v_1)\cup(1,v_2)$, $h(v)>0$, $v\in({v}_1,1)\cup({v}_2,\infty)$ and $h(v)=0$, $v\in\{v_1,1,v_2\}$.
We can use (\ref{g2}) to transform these results to $g'(\theta)$.
Hence, we obtain that $g'(\theta)<0$, $\theta\in(0,\theta_1)\cup(\theta_*,\theta_2)$, $g'(\theta)>0$, $\theta\in(\theta_1,\theta_*)\cup(\theta_2,\frac{\pi}{2})$ and $g'(\theta)=0$, $\theta\in\{0,\theta_1,\theta_*,\theta_2,\frac{\pi}{2}\}$, which implies $g(\theta)$ has only three local maxima: 0, $\theta_*$ and $\frac{\pi}{2}$.

(ii)
We next  consider the case where $\alpha=1+\frac{1}{d-1}$. We divided the proof into two cases.

{\em Case 1: $m_1=\frac{d+1}{2}$ .}
 Lemma $\ref{L4}$ implies that $h(v)$ is monotonically increasing on $(0,\infty)$. Noting that $h(0)=-(m_1-1)<0$ and $h(1)=0$, we have $h(v)<0$, $v\in(0,1)$ and $h(v)>0$, $v\in(1,\infty)$. We use (\ref{g2}) to transform the result to $g'(\theta)$ and obtain   that $g'(\theta)<0$, $\theta\in(0,\theta_*)$, $g'(\theta)>0$, $\theta\in(\theta_*,\frac{\pi}{2})$ and $g'(\theta)=0$, $\theta\in\{0,\theta_*,\frac{\pi}{2}\}$, which implies $g(\theta)$ has only two local maxima: 0 and $\frac{\pi}{2}$.

{\em Case 2: $m_1<\frac{d+1}{2}$ .}
According to  Lemma $\ref{L4}$,  there exists $\hat{v}_3\in(0,1)$ such that $h'(v)>0$ for  $v\in (0,\hat{v}_3)\cup(1,\infty)$ and $h'(v)<0$ for  $v\in (\hat{v}_3,1)$.

If $m_1=1$, then $h(0)=h(1)=0$. According to the sign of $h'(v)$, we obtain that  $h(v)\geq 0$, $v\in[0,\infty)$. Equation (\ref{g2}) implies that $g'(\theta)$ is always non-negative on $[0,\frac{\pi}{2}]$, which means $\frac{\pi}{2}$ is the only local maxima of $g(\theta)$.

If $1<m_1<\frac{d+1}{2}$, then $h(0)<0$. So there exists  $v_3\in(0,\hat{v}_3)$ such that $h(v)<0$, $v\in(0,v_3)$ and $h(v)\geq0$, $v\in[v_3,\infty)$. Set $\theta_3:=\arctan(v_3\sqrt{\frac{m_2}{m_1}})$. According to (\ref{g2}), we have $g'(\theta)<0$, $\theta\in(0,\theta_3)$, $g'(\theta)>0$, $\theta\in(\theta_3, \frac{\pi}{2})$ and $g'(\theta)=0$, $\theta\in\{0,\theta_3,\frac{\pi}{2}\}$, which implies $g(\theta)$ has only two local maxima: 0 and $\frac{\pi}{2}$.

\end{proof}

\begin{remark}
When $1<\alpha\leq1+\frac{1}{d-1}$, combining Lemma \ref{L1} and Lemma \ref{L2}, we obtain that $(\frac{1}{d+1},\ldots,\frac{1}{d+1})$ is the only local maxima of $M_{\alpha,d+1}(z_1,\ldots,z_{d+1})$ with the constraints $z_1+\cdots+z_{d+1}=1$ and $z_i> 0$, $i=1,2,\ldots,d+1$.

\end{remark}

We deal with the case $\alpha>1+\frac{1}{d-1}$ in the next lemma.

\begin{lemma}\label{L3}
Assume that $\alpha>1+\frac{1}{d-1}$ and $d\geq 2$. Assume that $(w_1,w_2,\ldots,w_{d+1})$ is a local maxima of  $M_{\alpha,d+1}(z_1,\ldots,z_{d+1})$ with the constraints in (\ref{con0}).
Then there exists $k_0\in \{1,\ldots,d+1\}$ such that $w_{k_0}=0$.
\end{lemma}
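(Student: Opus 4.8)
The plan is to argue by contradiction, assuming $(w_1,\ldots,w_{d+1})$ is a local maximum of $M_{\alpha,d+1}$ with every $w_i>0$, and to exhibit a feasible direction along which the second derivative is strictly positive. Since all the inequality constraints $z_i\ge 0$ are inactive, the relevant second-order necessary condition is that the Hessian $H$ of $M_{\alpha,d+1}$ be negative semidefinite on the tangent space $T:=\{u\in\R^{d+1}:\sum_i u_i=0\}$; finding $u\in T$ with $u^\T H u>0$ is the desired contradiction. By Lemma \ref{L1} the point is, up to a permutation, either constant or of the two-value form $\left(\underbrace{t_0,\ldots,t_0}_{m_1},\underbrace{s_0,\ldots,s_0}_{m_2}\right)$ with $m_1\le m_2=d+1-m_1$, $t_0,s_0>0$, and with $t_0$ a local (hence critical) point of $f_{m_1,\alpha,d+1}$.

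First I would record the Hessian. Writing $S:=\sum_i w_i^\alpha$ and $P(x):=(\alpha-1)S-(2\alpha-1)x^\alpha$, a direct computation gives $H_{ij}=2\alpha^2(w_iw_j)^{\alpha-1}$ for $i\ne j$ and $H_{ii}=2\alpha(\alpha-1)w_i^{\alpha-2}(S-w_i^\alpha)$. Setting $\vb:=(w_1^{\alpha-1},\ldots,w_{d+1}^{\alpha-1})^\T$, this yields the clean decomposition
\begin{equation*}
u^\T H u=2\alpha^2\,(\vb^\T u)^2+2\alpha\sum_{i=1}^{d+1}w_i^{\alpha-2}\,P(w_i)\,u_i^2 .
\end{equation*}
The sign of $P$ is governed by $\psi(x):=x^{\alpha-1}(S-x^\alpha)$, since $\psi'(x)=x^{\alpha-2}P(x)$; as $\psi$ is strictly unimodal on $(0,\infty)$ with a single peak at $x_c:=\left((\alpha-1)S/(2\alpha-1)\right)^{1/\alpha}$, and the first-order relation \eqref{23} forces $\psi(t_0)=\psi(s_0)$, the two distinct values must straddle $x_c$. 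Hence the smaller of them, say $a$, satisfies $a<x_c$ and therefore $P(a)>0$. In the constant case the single value $w$ satisfies $P(w)=w^\alpha(\alpha(d-1)-d)>0$, precisely because $\alpha>1+\frac1{d-1}$.

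The contradiction then comes from a within-block perturbation. If the block carrying the smaller value $a$ contains two indices $i\ne j$, take $u:=\be_i-\be_j\in T$; then $\vb^\T u=0$ and
\begin{equation*}
u^\T H u=4\alpha\,a^{\alpha-2}\,P(a)>0,
\end{equation*}
contradicting negative semidefiniteness of $H$ on $T$. Thus it only remains to guarantee that the smaller value has multiplicity at least $2$. In the constant case its multiplicity is $d+1$; in the two-value case with $m_1\ge 2$ both blocks have size $\ge 2$; in either situation we are done immediately.

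\textbf{The one delicate case, and the main obstacle, is $m_1=1$}, where the lone coordinate $t_0$ could a priori be the smaller value and cannot be perturbed against a partner. I would rule this out through the sign analysis of $h$ from Lemma \ref{L4}. Since $t_0$ is critical for $f_{1,\alpha,d+1}$, identity \eqref{g2} gives $h(v_0)=0$ with $v_0=\sqrt{s_0/t_0}$. For $m_1=1$ (so $m_2=d$) and $\alpha>1+\frac1{d-1}$, Lemma \ref{L4}(iii) gives $h(v)>0$ for all $v>1$, while $h(1)=0$ corresponds only to the constant configuration; hence necessarily $v_0<1$, i.e.\ $s_0<t_0$. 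Therefore the smaller value is $s_0$, whose multiplicity is $m_2=d\ge 2$, and the within-block perturbation above applies. This exhausts every case and shows that no all-positive local maximum can exist, so each local maximum has a vanishing coordinate.
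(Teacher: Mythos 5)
Your proof is correct, and it rests on the same underlying mechanism as the paper's proof — a second-order perturbation redistributing mass within the block of repeated smaller coordinates — but it packages that mechanism differently, and in one respect more sharply. Your Hessian entries and the decomposition $u^{\T}Hu=2\alpha^2(\vb^{\T}u)^2+2\alpha\sum_i w_i^{\alpha-2}P(w_i)u_i^2$ check out. The paper instead fixes the single direction $u=(0,\ldots,0,l,-1,\ldots,-1)$ with $l=m_2-1$ supported on the $s_0$-block and computes $F''(0)$ by hand; since that direction satisfies $\vb^{\T}u=0$, the paper's formula \eqref{t0} is exactly your quadratic form, $F''(0)=2\alpha(l^2+m_2-1)s_0^{\alpha-2}P(s_0)$, so the two arguments differ only in how the sign of $P$ is secured. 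The paper proves $s_0\leq t_0$ for \emph{every} $m_1$ via Lemma \ref{L4}(iii) and then bounds $P(s_0)\geq s_0^{\alpha}\left((d-1)\alpha-d\right)>0$, invoking $\alpha>1+\frac{1}{d-1}$ in all cases. You instead extract $P\left(\min(t_0,s_0)\right)>0$ directly from the stationarity condition \eqref{23}: constancy of $\psi(w_i)$ plus strict unimodality of $\psi$ forces the two values to straddle the peak $x_c$, so the smaller value has $P>0$ for every $\alpha>1$, with no appeal to Lemma \ref{L4}. As a result, the hypothesis $\alpha>1+\frac{1}{d-1}$ and Lemma \ref{L4}(iii) enter your argument only where they are truly needed — the constant case, where $P(w)=w^{\alpha}(\alpha(d-1)-d)$, and the case $m_1=1$, where (exactly as in the paper, via \eqref{g2} and $h>0$ on $(1,\infty)$) one must rule out the lone coordinate $t_0$ being the smaller value; you correctly flagged $m_1=1$ as the one delicate case. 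What your version buys is this localization of hypotheses, plus the byproduct that every interior critical point with two genuinely distinct values, each of multiplicity at least $2$, is a saddle for all $\alpha>1$ (consistent with Lemma \ref{L2}); what the paper's version buys is self-containedness — one explicit derivative computation along one direction, treating all $m_1$ uniformly, with no Hessian machinery or second-order KKT conditions.
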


\begin{proof}
We proceed by contradiction, supposing that $w_i>0$ for $i\in \{1,\ldots,d+1\}$.
According to Lemma \ref{L1}, $(w_1,\ldots,w_{d+1})$ is in the form
$\left(\underbrace{t_0,\ldots,t_0}_{m_1},\underbrace{s_0,\ldots,s_0}_{d+1-m_1}\right)$ up to a permutation where $m_1\in [1,\frac{d+1}{2}]\cap \Z$, $t_0\in (0,\frac{1}{m_1})$ and $s_0=\frac{1-m_1t_0}{d+1-m_1}$.
Lemma \ref{L1} also implies that  $t_0$ is a local maxima of $f_{m_1,\alpha,d+1}(t)$. So, it is enough to show the following claim:

{\em Claim 1:} When $\alpha>1+\frac{1}{d-1}$, if $t_0\in (0,\frac{1}{m_1})$ is a local maxima
of $f_{m_1,\alpha,d+1}(t)$, then $\left(\underbrace{t_0,\ldots,t_0}_{m_1},\underbrace{s_0,\ldots,s_0}_{d+1-m_1}\right)$ is not a local maxima of $M_{\alpha,d+1}(z_1,\ldots,z_{d+1})$ with the constraints in (\ref{con0}).

Claim 1 contradicts $\left(\underbrace{t_0,\ldots,t_0}_{m_1},\underbrace{s_0,\ldots,s_0}_{d+1-m_1}\right)$ being  a local maxima of $M_{\alpha,d+1}(z_1,\ldots,z_{d+1})$ with the constraints in (\ref{con0}). Hence, there exists $k_0\in\{1,\ldots,d+1\}$ such that $w_{k_0}=0$.

It remains to prove Claim 1.
For convenience, set $m_2:=d+1-m_1$. Since $m_1\leq \frac{d+1}{2}$ and $d\geq 2$, we have $m_2\geq2$.
Set
\begin{equation*}
F(\varepsilon):=M_{\alpha,d+1}\left(\underbrace{t_0,\ldots,t_0}_{m_1},s_0+l\varepsilon,\underbrace{s_0-\varepsilon,\ldots,s_0-\varepsilon}_{m_2-1}\right),
\end{equation*}
where $l=m_2-1$ and $\varepsilon\in (-\frac{s_0}{l},s_0)$. We aim to show that $\varepsilon=0$ is not a local maxima of $F(\varepsilon)$. In fact, we can prove this by showing that  $\varepsilon=0$ is a local minima of $F(\varepsilon)$.

A simple calculation shows that
\begingroup\fontsize{10pt}{12pt}\selectfont
\[
\begin{aligned}
F(\varepsilon)=&(m_1\cdot t_0^\alpha+(s_0+l\varepsilon)^\alpha+(m_2-1)(s_0-\varepsilon)^\alpha)^2-(m_1\cdot t_0^{2\alpha}+(s_0+l\varepsilon)^{2\alpha}+(m_2-1)(s_0-\varepsilon)^{2\alpha}),\\
F'(\varepsilon)=&2\alpha\cdot (m_1\cdot t_0^\alpha+(s_0+l\varepsilon)^\alpha+(m_2-1)(s_0-\varepsilon)^\alpha)\cdot (l(s_0+l\varepsilon)^{\alpha-1}-(m_2-1)(s_0-\varepsilon)^{\alpha-1})\\
&-2\alpha\cdot (l(s_0+l\varepsilon)^{2\alpha-1}-(m_2-1)(s_0-\varepsilon)^{2\alpha-1}),\\
F''(\varepsilon)=&2\alpha^2\cdot (l(s_0+l\varepsilon)^{\alpha-1}-(m_2-1)(s_0-\varepsilon)^{\alpha-1})^2\\
&+2\alpha(\alpha-1)\cdot (m_1\cdot t_0^\alpha+(s_0+l\varepsilon)^\alpha+(m_2-1)(s_0-\varepsilon)^\alpha)\cdot(l^2(s_0+l\varepsilon)^{\alpha-2}+(m_2-1)(s_0-\varepsilon)^{\alpha-2}) \\
&-2\alpha(2\alpha-1)\cdot (l^2(s_0+l\varepsilon)^{2\alpha-2}+(m_2-1)(s_0-\varepsilon)^{2\alpha-2}).
\end{aligned}
\]
\endgroup
Noting $l=m_2-1$, we can check that
\begin{equation}\label{f'}
F'(0)=0.
\end{equation}
We claim $F''(0)>0$ and hence $\varepsilon=0$ is a local minima of $F(\varepsilon)$.

It remains finally to prove $F''(0)>0$.
Note that
\begin{equation}\label{t0}
\begin{aligned}
F''(0)&=2\alpha\cdot (l^2+m_2-1)\cdot s_0^{\alpha-2}((\alpha-1)(m_1t_0^\alpha+m_2s_0^\alpha)-(2\alpha-1)s_0^\alpha).
\end{aligned}
\end{equation}
Since  $t_0\notin \{ 0,\frac{1}{m_1}\}$ is a local maxima of $f_{m_1,\alpha,d+1}(t)$, from equation (\ref{g2}) we know that $\sqrt{\frac{s_0}{t_0}}$ is a root of $h(v)=0$, where $h(v)=(m_2-1)v^{4\alpha-2}-m_2\cdot v^{2\alpha}+m_1\cdot v^{2\alpha-2}-(m_1-1)$.
According to Lemma   \ref{L4}, $h(v)>0$ for $v>1$ provided $\alpha\geq 1+\frac{1}{d-1}$, which
 implies that $\sqrt{\frac{s_0}{t_0}}\leq 1$ and hence $s_0\leq t_0$. Combining $s_0>0$ and  $l^2+m_2-1\geq 2$, we have
\[
\begin{aligned}
F''(0)&\geq 2\alpha\cdot (l^2+m_2-1)\cdot s_0^{\alpha-2}((\alpha-1)(m_1s_0^\alpha+m_2s_0^\alpha)-(2\alpha-1)s_0^\alpha)\\
&=2\alpha\cdot (l^2+m_2-1)\cdot s_0^{2\alpha-2}((\alpha-1)(m_1+m_2)-(2\alpha-1))\\
&=2\alpha\cdot (l^2+m_2-1)\cdot s_0^{2\alpha-2}((d-1)\alpha-d).
\end{aligned}
\]
Noting that $\alpha>1+\frac{1}{d-1}$, we obtain
\begin{equation}\label{f''}
F''(0)>0.
\end{equation}
\end{proof}

We next present the proof of Lemma \ref{M}.

\begin{proof}[Proof of Lemma \ref{M}]

 We prove Lemma \ref{M}  by induction on $d$. First, we consider the case $d=1$. For $d=1$, we have only two non-negative variables $z_1,z_2$ which satisfy $z_1+z_2=1$. For any $\alpha> 1$ we have
\begin{equation*}
M_{\alpha,2}=2z_1^{\alpha} z_2^{\alpha}\leq 2\cdot \left(\frac{z_1+z_2}{2}\right)^{\alpha}=2^{1-\alpha},
\end{equation*}
where equality holds if and only if $z_1=z_2=\frac{1}{2}$. Hence, the solution to (\ref{con0}) is $(\frac{1}{2},\frac{1}{2})$ which implies  Lemma \ref{M} holds for $d=1$. We assume that Lemma \ref{M} holds for $d= d_0-1$ and hence we know the solution to (\ref{con0})
 for $d=d_0-1$. So, we consider the case where $d=d_0$.

Assume that  $(w_1,\ldots,w_{d_0+1})$ is a solution to (\ref{con0}) with $d=d_0$.  Recall that
$a_0=\infty$, $a_{d_0}=1$, $a_k=\frac{1}{2}\cdot\frac{\ln(k+2)-\ln(k)}{\ln(k+2)-\ln(k+1)}$, $k=1,2,\ldots,d_0-1$. For convenience, we set
$\be_{k+1}:=(\frac{1}{k+1},\ldots,\frac{1}{k+1})\in \R^{k+1}$ and $\bl_{d_0-k}:=(0,\ldots,0)\in \R^{d_0-k}$.
We also set
\[
(\be_{k+1},\bl_{d_0-k}):=\left(\underbrace{\frac{1}{k+1},\ldots,\frac{1}{k+1}}_{k+1},
\underbrace{0,\ldots,0}_{d_0-k}\right).
\]
We first show 
\begin{equation}\label{eq:jihe}
(w_1,\ldots,w_{d_0+1})\in \left\{\left(\be_{k+1},\bl_{d_0-k}\right)\in \R^{d_0+1}:k=1,\ldots,d_0\right\},
\end{equation}
dividing the proof into two cases.

{\em Case 1: $\alpha\in (1+\frac{1}{d_0-1},\infty)$ .}

According to Lemma \ref{L3}, at least one of the entries in $(w_1,\ldots,w_{d_0+1})$ is 0.  Without loss of generality, we assume $w_{d_0+1}=0$.  Since $M_{\alpha,d_0+1}(w_1,\ldots,w_{d_0},0)=M_{\alpha,d_0}(w_1,\ldots,w_{d_0})$,  $(w_1,\ldots,w_{d_0})$ is the solution to (\ref{con0}) with $d=d_0-1$. Hence, by induction we conclude that (\ref{eq:jihe}) holds.

{\em Case 2: $\alpha\in (1,1+\frac{1}{d_0-1}]$.}

If one of entries in $(w_1,\ldots,w_{d_0+1})$ is 0, we can show that
(\ref{eq:jihe}) holds using a similar argument as above.
So, we consider the case where  $w_i>0$ for each $i\in\{1,\ldots,d_0+1\}$. Lemma \ref{L1} shows that
 $
 (w_1,\ldots,w_{d_0+1})$ is in the form  $\left(\underbrace{t_0,\ldots,t_0}_{m_1},\underbrace{s_0,\ldots,s_0}_{d_0+1-m_1}\right)
 $ up to a permutation where $m_1\in [1,\frac{d_0+1}{2}]\cap \Z$, $t_0\in (0,\frac{1}{m_1})$ and $s_0=\frac{1-m_1t_0}{d_0+1-m_1}$
. Lemma \ref{L1} also implies that  $t_0$ is a local maxima of the function $f_{m_1,\alpha,d_0+1}(t)$, where $f_{m_1,\alpha,d_0+1}(t)$ is defined in (\ref{fm}). According to  Lemma \ref{L2}, we obtain $t_0=\frac{1}{d_0+1}$. Hence $(w_1,\ldots,w_{d_0+1})=(\frac{1}{d_0+1},\ldots,\frac{1}{d_0+1})$ , which implies (\ref{eq:jihe}).

It is now enough to  compare the values among
$
M_{\alpha,d_0+1}\left(\be_{k+1},\bl_{d_0-k}\right),
k=1,\ldots,d_0.
$
Setting $H(x):=x^{1-2\alpha}(x-1)$, we obtain $M_{\alpha,d_0+1}\left(\be_{k+1},\bl_{d_0-k}\right)=H(k+1)$ for each $k\in\{1,2,\ldots,d_0\}$. A simple calculation shows that $H(x)$ is monotonically increasing on $(0,1+\frac{1}{2\alpha-2})$ and monotonically decreasing on $(1+\frac{1}{2\alpha-2},\infty)$. Hence, the sequence
 $H(k+1),k=1,\ldots,d_0$, is unimodal.

 (i) Firstly, we consider the case where $\alpha\in(a_k,a_{k-1})$, $k=1,2,\ldots,d_0-1$.
 Noting that $H(k)<H(k+1)$ and $H(k+1)>H(k+2)$, we obtain
\begin{equation}
\max\limits_{x\in\{1,2,\dots,d_0\}} H(x+1)=H(k+1), \text{ for all } \alpha\in(a_k,a_{k-1}),
\end{equation}
where equality holds if and only if $x=k$. For the case $\alpha\in(a_{d_0},a_{d_0-1})$, noting that $H(d_0)<H(d_0+1)$, we obtain
\begin{equation}
\max\limits_{x\in\{1,2,\dots,d_0\}} H(x+1)=H(d_0+1), \text{ for all } \alpha\in(a_{d_0},a_{d_0-1}),
\end{equation}
To summarize,
$
\left(\be_{k+1},\bl_{d_0-k}\right)
 $
 is the unique solution to (\ref{con0}) with $d=d_0$ when $\alpha\in(a_k,a_{k-1})$, $k=1,2,\ldots,d_0$.

(ii) It remains finally to check the case where $\alpha=a_k$, $k=1,2,\ldots,d_0-1$.
Noting $H(k+1)=H(k+2)$, $H(k)<H(k+1)$ and $H(k+2)>H(k+3)$ provided $\alpha=a_k,\ k=1,2,\ldots,d_0-2$,
we obtain that (\ref{con0}) has two solutions which are
 $\left(\be_{k+2},\bl_{d_0-k-1}\right)$ and $\left(\be_{k+1},\bl_{d_0-k}\right)$  with $d=d_0$ .
When $\alpha=a_{d_0-1}$, noting $H(d_0)=H(d_0+1)$ and $H(d_0-1)<H(d_0)$, we obtain that (\ref{con0}) also has two solutions which are
 $\be_{d_0+1}$ and $\left(\be_{d_0},\bl_{1}\right)$  with $d=d_0$ .
Hence, the conclusion also holds for $d=d_0$ which completes the proof.
\end{proof}

{\bf Acknowledgement.} We would  like to thank the anonymous reviewers for their
comments which help improve this paper substantially.

\end{document}